\documentclass[reqno]{amsart}
\RequirePackage[l2tabu, orthodox]{nag}

\usepackage{amssymb}
\usepackage[style=numeric]{biblatex}
\usepackage{booktabs}
\usepackage{color}
\usepackage[pagewise]{lineno}
\usepackage{mathtools}
\usepackage{microtype}
\usepackage{svg}
\usepackage{tcolorbox}
\usepackage{tikz-cd}
\tcbuselibrary{theorems}

\usepackage{hyperref}
\usepackage{mathrsfs}

\theoremstyle{plain}
\newtheorem{theorem}{Theorem}[section]

\newtheorem{proposition}[theorem]{Proposition}
\newtheorem{lemma}[theorem]{Lemma}

\theoremstyle{definition}

\theoremstyle{remark}
\newtheorem{remark}[theorem]{Remark}

\newcommand{\IGNORE}[1]{}

\newcommand{\C}{\mathbb{C}}

\newcommand{\R}{\mathbb{R}}

\newcommand{\BA}{\boldsymbol{A}}
\newcommand{\BJ}{\boldsymbol{j}}
\newcommand{\BP}{\boldsymbol{p}}

\newcommand{\BU}{\boldsymbol{u}}
\newcommand{\BV}{\boldsymbol{v}}
\newcommand{\BW}{\boldsymbol{w}}
\newcommand{\BX}{\boldsymbol{x}}

\newcommand{\EV}{\mathrm{even}}
\newcommand{\CL}[1]{\mathrm{Cl}_{#1}}
\newcommand{\FA}{\mathfrak{A}}
\newcommand{\FB}{\mathfrak{b}}

\newcommand{\FE}{\mathfrak{e}}

\newcommand{\FJ}{\mathfrak{j}}
\newcommand{\FL}{\mathfrak{l}}
\newcommand{\FN}{\mathfrak{n}}
\newcommand{\FP}{\mathfrak{p}}
\newcommand{\FR}{\mathfrak{r}}
\newcommand{\FQ}{\mathfrak{q}}

\newcommand{\FU}{\mathfrak{u}}

\newcommand{\GS}{\geqslant}
\newcommand{\IM}{\mathrm{Im}}

\newcommand{\LS}{\leqslant}

\newcommand{\OP}{\overline{P}}

\newcommand{\RE}{\mathrm{Re}}

\newcommand{\UO}{\mathbf{U}}
\newcommand{\VE}{\mathrm{v}}

\newcommand{\BVV}{\boldsymbol{V}}

\newcommand{\ONE}{\boldsymbol{1}}
\newcommand{\RHO}{\varrho}
\newcommand{\TRACE}{\mathrm{tr}}

\newcommand{\OPHI}{\overline{\phi}}
\newcommand{\WPHI}{\widehat{\phi}}

\newcommand{\BGAMMA}{\boldsymbol{\Gamma}}
\newcommand{\BNABLA}{\boldsymbol{\nabla}}

\newcommand{\BTHETA}{\boldsymbol{\theta}}

\numberwithin{equation}{section}

\title{A Hydrodynamics Formulation for a Nonlinear Dirac Equation}

\author{Joan Morrill i Gavarró}
\author{Michael Westdickenberg}

\address{%
    Joan Morrill i Gavarró,
    Lehrstuhl f\"{u}r Mathematik (Analysis),
    RWTH Aachen University,
    Im Süsterfeld 2,
    D-52072 Aachen,
    Germany}
\email{morrill@eddy.rwth-aachen.de}

\address{%
    Michael Westdickenberg,
    Lehrstuhl f\"{u}r Mathematik (Analysis),
    RWTH Aachen University,
    Im Süssterfeld 2,
    D-52072 Aachen,
    Germany}
\email{mwest@instmath.rwth-aachen.de}

\date{\today}

\subjclass[2010]{%
	15A66, 
	35Q41, 
	81R25} 

\keywords{Nonlinear Dirac equation, hydrodynamics formulation, Clifford algebra}

\thanks{The authors thank Deutsche Forschungsgemeinschaft/German Research Foundation (DFG) for financial support through 320021702 / GRK2326.}

\begin{document}

\begin{abstract}
We derive a hydrodynamics formulation for a modified Dirac equation with a nonlinear mass term that preserves the homogeneity of the original Dirac equation. The nonlinear Dirac equation admits a symmetric split into the left and right-handed spinor components. It is formulated using Clifford algebra tools. We prove global existence for a regularized equation.
\end{abstract}

\maketitle



\section{Introduction}

It is well-known that the Schrödinger equation of quantum mechanics
\[
	i\hbar \frac{\partial}{\partial t} \psi(t,\BX)
		= \left( \frac{-\hbar^2}{2m} \Delta + V(t,\BX) \right) \psi(t,\BX)
\]
can be rewritten in the form of a system of conservation laws in terms for hydrodynamics variables: the density $\RHO$ and an (essentially) irrotational velocity $\BU$. To this end, one starts with the polar decomposition of the $\C$-valued wave function
\[
	\psi(t,\BX) \eqcolon \sqrt{\frac{1}{m} \RHO(t,\BX)} e^{\frac{i}{\hbar} S(t,\BX)},
\]
which defines $\RHO$ and a real-valued phase function $S$. The velocity is then given as
\[
	\BU(t,\BX) \coloneq \frac{1}{m} \nabla S(t,\BX),
\]
and $(\RHO, \BU)$ together satisfy the compressible fluid equations
\begin{equation}
\begin{alignedat}{2}
	\partial_t \RHO + \nabla\cdot(\RHO\BU) &= 0
		&\quad& \text{continuity equation,}
\\
	\partial_t \BU + \BU\cdot\nabla \BU + \frac{1}{m} \nabla(V+Q) &= 0
		&& \text{velocity equation.}
\end{alignedat}
\label{E:JJH}
\end{equation}
The quantum effects are captured in the non-classical field
\begin{equation}
	Q \coloneq -\frac{\hbar^2}{2m} \frac{\Delta\sqrt{\RHO}}{\sqrt{\RHO}},
\label{E:BOHM}
\end{equation}
called quantum potential. It is the only term in \eqref{E:JJH} that is multiplied by the Planck constant $\hbar$. The velocity equation can alternatively be replaced by a Hamilton-Jacobi type equation for the phase field $S$. The outlined procedure is known as the Madelung transform or hydrodynamics formulation; see \cites{Madelung1926, Madelung1927}.

\medskip

The Schrödinger equation is understood as the non-relativistic approximation of the more fundamental Dirac equation, which we will discuss in more detail in Section \ref{S:IDE}. There has been substantial work trying to establish a hydrodynamics formulation of the Dirac equation, most notably by Takabayasi \cite{Takabayasi1957}. The resulting model is rather complicated and suffers from the occurrence of a mysterious scalar field, called the Yvon-Takabayasi angle, which spoils the symmetry and nonnegativity of mass and energy. This seems to be a consequence of the linearity of the equation.

\medskip

In this paper, we propose a different hydrodynamics formulation for the Dirac equation. Our work is based on two important ingredients. First, we use the \emph{space algebra} $\CL{3}$ instead of the more commonly used $\C^4$-valued spinor functions. Clifford algebra tools have been utilized before for studying the Dirac equation, most notably by Hestenes; see \cite{Hestenes1973} and the references therein. The relevant structure suggested by the classical form of the Dirac equation is that of (an even subalgebra of) the \emph{spacetime algebra}. We prefer the more compact $\CL{3}$ representation, which simplifies computations significantly. We follow the exposition and notation of Baylis \cite{Baylis1996}.

Second, we consider a \emph{nonlinear} variant of the Dirac equation. The model we use is different from, say, the cubic Soler model or the Thirring model, which have attracted considerable interest. Instead we use a model proposed by Daviau (see \cite{DaviauBertrandSocrounGirardot2022} and the references therein), which in a sense just adds the minimal amount of nonlinearity needed to achieve an additional $\mathrm{U}(1)$ symmetry, while keeping the first-order homogeneity of the equation. As discussed in \cite{DaviauBertrandSocrounGirardot2022}, the model correctly predicts the electronic energy levels in a hydrogen atom -- the fundamental problem that motivated Dirac. It admits a very natural splitting of the spinor into left and right-handed components, which will be crucial for our approach.

Indeed the hydrodynamics formulation we propose in this paper splits naturally into two parts for the left and right waves, which are only weakly coupled through the Dirac current. In the words of De Broglie, we like to think of the Dirac current as the pilot wave that guides the evolution of the particles. The resulting model admits two sets of congruences of flowlines, one each for the left and right waves, that wind around the flowlines determined by the Dirac current. Such a model has been considered before by Hestenes \cite{Hestenes2020a, Hestenes2020b} who connects the helical motion to spin and to Schrödinger's prediction of a rapid oscillatory motion of elementary particles, for which he coined the term Zitterbewegung.


\section{Space algebra}
\label{S:SA}

The space algebra $\CL{3}$ (also called the geometric algebra) is the Clifford algebra of the three-dimensional Euclidean space $\R^3$. We will use the following correspondence: Letters with arrows mark elements in $\R^3$, with the same letter in bold representing an element in $\CL{3}$ with the same coefficients. Thus
\begin{equation}
    \vec{x} = \begin{pmatrix} x^1 \\ x^2 \\ x^3 \end{pmatrix} \in \R^3
    \quad\Longleftrightarrow\quad
    \BX = x^k \FE_k \in \CL{3}.
\label{E:EMB}
\end{equation}
Here $\FE_k$ are suitable basis vectors in $\CL{3}$ specified below. Throughout, we employ the Einstein summation convention, which means summation over $1\ldots 3$ in case of repeated Latin indices. The Clifford algebra $\CL{3}$ augments the vector space structure of $\R^3$ with an associative bilinear product with the property that
\begin{equation}
    \BX^2 = \BX \BX = \vec{x}\cdot\vec{x}
    \quad\text{for all $\vec{x} \in \R^3$,}
\label{E:FUNDA}
\end{equation}
where $\cdot$ marks the Euclidean inner product
\begin{equation}
	\vec{x}\cdot\vec{y} \coloneq \sum_{k=1}^3 x^k y^k
	\quad\text{for all $\vec{x}, \vec{y} \in \R^3$,}
\label{E:CDOT}
\end{equation}
with $\|\vec{x}\| \coloneq \sqrt{\vec{x}\cdot\vec{x}}$ the induced norm. By polarization, \eqref{E:FUNDA} is equivalent to
\[
	\BU \BV + \BV \BU = 2 \vec{u}\cdot\vec{v}
	\quad\text{for all $\vec{u}, \vec{v} \in \R^3$,}
\]
which for the basis vectors $\FE_k$ translates into the structure equations
\begin{equation}
    \FE_k \FE_l + \FE_l \FE_k = 2\delta_{kl}
	\quad\text{for $k,l = 1 \ldots 3$,}
\label{E:STRUCTURE}
\end{equation}
with $\delta_{kl}$ the Kronecker delta. In particular, we need $\FE_k^2 = 1$ and $\FE_k \FE_l + \FE_l \FE_k = 0$ if $k\neq l$. Any product of basis vectors $\FE_k$ can be reduced to one from the family
\begin{equation}
    \{ \FE_0 \coloneq 1, \FE_k, \FE_k \FE_j, \FE_1 \FE_2 \FE_3 \}
    \quad\text{with $1 \LS k < j \LS 3$,}
\label{E:BASIS}
\end{equation}
up to a sign. As a consequence, $\CL{3}$ is $8$-dimensional as a vector space over $\R$. This is the same dimension as for Dirac spinors, which classically are given as $\C^4$-valued fields. Note that $\FE_0$ commutes with $\FE_k$, $k = 1\ldots 3$, and $\FE_0^2 = 1$. We will typically not explicitly mark $\FE_0$ when the meaning is clear from the context.

Using \eqref{E:STRUCTURE} one can check that $\FE_1 \FE_2 \FE_3$ squares to $-1$ and commutes with all basis vectors in \eqref{E:BASIS}. In the following, we will therefore identify $\FE_1 \FE_2 \FE_3$ with the imaginary unit $i$. It is convenient to consider $\CL{3}$ as a vector space over $\C$, for which the vectors $\FE_\mu$ with $\mu = 0\ldots 3$, form a basis. We will use the following correspondence: Letters in fraktur mark elements in $\CL{3}$, which we will call paravectors; see \cite{Baylis1996}. The same letter with index $0$ represents the scalar part of this paravector, the same letter in bold its spatial part, which can be expanded as in \eqref{E:EMB}. Thus
\[
    (p^0, \vec{p}) \in \C \times \C^3
    \quad\Longleftrightarrow\quad
    \FP \coloneq p^0 + \BP \coloneq p^\mu \FE_\mu \in \CL{3}.
\]
We extend \eqref{E:FUNDA}, \eqref{E:CDOT} to $\vec{x},\vec{y} \in \C^3$. Note, however, that \eqref{E:CDOT} is not an inner product because $\vec{x}\cdot\vec{x}$ may not be nonnegative (no complex conjugation in \eqref{E:CDOT}).

Vectors in $\CL{3}$ with \emph{real} coefficients are called spacetime vectors. They will play an important role in the following. Examples of spacetime vectors include
\begin{itemize}
\item the $4$-velocity $\FU = u^0 + \BU \eqcolon \gamma (1 + \BV)$,
\item the chiral current $\FJ = j^0 + \BJ$ with density $\RHO \coloneq j^0$,
\item the energy-momentum $\FP = p^0 + \BP$ with energy $E \coloneq p^0$,
\item the electromagnetic potential $\FA = A^0 + \BA$.
\end{itemize}


\subsection{Involutions}
\label{S:INVO}

Because of \eqref{E:ANTICO}, a general element $\FP \in \CL{3}$ is of the form
\begin{equation}
    \FP = a + \BU + i \BV + i b
    \quad\text{with $(a,\vec{u}), (b,\vec{v}) \in \R\times\R^3$.}
\label{E:PEQ}
\end{equation}
Thus $\FP$ is a sum of a spacetime vector $a+\BU$ and $i$ times another spacetime vector $b+\BV$. We can rewrite this, combining real and imaginary numbers, as
\begin{equation}
    \FP = c + \BW
    \quad\text{with $c \coloneq a+ib$ and $\BW \coloneq \BU+i\BV$.}
\label{E:PEGC}
\end{equation}

\medskip

(1) We define the \emph{complex conjugation} of \eqref{E:PEQ} as
\[
    \FP^\dagger \coloneq a + \BU - i\BV - ib.
\]
This operation changes the sign of parts multiplied by $i$, but leaves the basis vectors $\FE_\mu$, $\mu=0\ldots 3$, unchanged. It is an involution (applying the operation twice returns $\FP$) and can be used to isolate the real and imaginary parts of $\FP$, defined as
\begin{equation}
    \RE(\FP) \coloneq \frac{1}{2} (\FP+\FP^\dagger)
    \quad\text{and\quad}
    \IM(\FP) \coloneq \frac{1}{2i} (\FP-\FP^\dagger).
\label{E:REIM}
\end{equation}
Spacetime vectors $\FP = a+\BU$ with $a\in \R$, $\vec{u} \in \R^3$  can therefore be characterized as those elements $\FP \in \CL{3}$ that are real, meaning that $\FP = \FP_\RE$.


\medskip

(2) We define the \emph{grade automorphism} of \eqref{E:PEQ} as
\[
	\widehat{\FP} \coloneq a - \BU + i\BV - ib.
\]
It is again an involution. Basis vectors are changed according to
\begin{equation}
	\widehat{\FE_0} = \FE_0
	\quad\text{and}\quad
	\text{$\widehat{\FE_k} = -\FE_k$ for $k=1\ldots 3$.}
\label{E:IDD}
\end{equation}
The grade automorphism can be used to isolate even and odd parts, defined as
\[
    \FP_\EV \coloneq \frac{1}{2} (\FP+\widehat{\FP})
    \quad\text{and\quad}
    \FP_\mathrm{odd} \coloneq \frac{1}{2} (\FP-\widehat{\FP}).
\]
The subset of even elements (thus $\widehat{\FP} = \FP$) is a subalgebra of $\CL{3}$. It is four-dimensional as a vector space over $\R$ and generated by the basis vectors $\FE_0=1$ and $i\FE_k$, $k=1\ldots 3$. Since $(i\FE_k)^2 = -1$, the even subalgebra is isomorphic to the quaternions.


\medskip

(3) We define the \emph{spatial reversal} of \eqref{E:PEQ}/\eqref{E:PEGC} as
\begin{equation}
	\overline{\FP} \coloneq a - \BU - i\BV + ib = c - \BW.
\label{E:SPATR}
\end{equation}
This operation only reverses the sign of the vector part, but leaves the scalar part unchanged. It is again an involution. Basis vectors change according to
\begin{equation}
	\overline{\FE_0} = \FE_0
	\quad\text{and}\quad
	\text{$\overline{\FE_k} = -\FE_k$ for $k=1 \ldots 3$.}
\label{E:EBA}
\end{equation}
The spatial reversal can be used to compute the scalar and vector parts of $\FP$:
\begin{equation}
	\FP_0 \coloneq \frac{1}{2} (\FP+\overline{\FP})
    \quad\text{and\quad}
    \FP_\VE \coloneq \frac{1}{2} (\FP-\overline{\FP}).
\label{E:SCALAR}
\end{equation}
Note that the $\FE^0$-coefficient of $\FP$ is the scalar part; therefore the notation is consistent. We observe further that for any $\FP \in \CL{3}$ as in \eqref{E:PEQ} there holds
\[
	 \big( \RE(\FP) \big)_0 = (a+\BV)_0 = a = \RE(a+ib) = \RE( \FP_0 );
\]
the operations commute. One can check that $\widehat{\FP} = \overline{\FP}^\dagger$, so that only two of the three involutions are actually independent. For all $\FP_1, \FP_2$ as in \eqref{E:PEGC}/\eqref{E:PEQ}, we have
\begin{equation}
	\FP_1\overline{\FP_2} + \FP_2\overline{\FP_1}
		= 2 c_1 c_2 - (\BW_1\BW_2 + \BW_2\BW_1)
		= \overline{\FP_1}\FP_2 + \overline{\FP_2}\FP_1,
\label{E:PP3}
\end{equation}
which is \emph{always a (complex) scalar} because $\BW_1\BW_2 + \BW_2\BW_1 = 2 \vec{w}_1 \cdot \vec{w}_2$.
(In contrast, using complex conjugation or grade automorphism in an analogous construction provides a scalar product for the real and even \emph{subalgebras}, respectively, but the product of two general elements in $\CL{3}$ may have scalar \emph{and vector} parts.)

\medskip

The three involutions act differently on products. We have
\begin{equation}
    (\FP\FQ)^\dagger = \FQ^\dagger \FP^\dagger
    \quad\text{and}\quad
    \overline{\FP\FQ} = \overline{\FQ} \, \overline{\FP},
    \quad\text{whereas}\quad
    \widehat{\FP\FQ} = \widehat{\FP} \, \widehat{\FQ}.
\label{E:ORDER}
\end{equation}
This can again be proved using the decomposition of a paravector into its components as in \eqref{E:PEQ} or \eqref{E:PEGC}. One only needs to take into consideration that the product of two spatial vectors has both a scalar and a vector part, which transform differently. Consider for example two paravectors $\FP_k = c_k+\BW_k$, $k=1,2$, as in \eqref{E:PEGC}. Then
\begin{align*}
	\FP_1\FP_2
		& = (c_1+\BW_1)(c_2+\BW_2)
\\
		& = (c_1c_2 + \vec{w}_1 \cdot \vec{w}_2)
		+ \Big( c_1\BW_2 + c_2\BW_1 + {w_1}^k {w_2}^l \, i{\varepsilon_{kl}}^m \FE_m \Big),
\end{align*}
where we have used \eqref{E:EMB} and \eqref{E:ANTICO}. Its spatial reversal is
\begin{equation}
	\overline{\FP_1\FP_2}
		= (c_1c_2 + \vec{w}_1 \cdot \vec{w}_2)
		- \Big( c_1\BW_2 + c_2\BW_1 + {w_1}^k {w_2}^l \, i{\varepsilon_{kl}}^m \FE_m \Big),
\label{E:HELP1}
\end{equation}
because this operation changes the sign of vector parts. Similarly, we compute
\begin{align*}
	\overline{\FP_2} \, \overline{\FP_1}
		& = (c_2-\BW_2)(c_1-\BW_1)
\\
		& = (c_1c_2 + \vec{w}_1\cdot\vec{w}_2)
		+ \Big( -c_1\BW_2 - c_2\BW_1 + {w_2}^l {w_1}^k \, i{\varepsilon_{lk}}^m \FE_m \Big).
\end{align*}
This coincides with \eqref{E:HELP1} because ${\varepsilon_{kl}}^m = -{\varepsilon_{lk}}^m$; see \eqref{E:LC}. We also have
\[
	(\FP_1\FP_2)_0
		= \frac{1}{2} (\FP_1\FP_2 + \overline{\FP_1\FP_2})
		= c_1 c_2 + \vec{w}_1 \cdot \vec{w}_2,
\]
which is clearly symmetric in the two arguments. It follows that
\begin{equation}
	(\FP\FQ)_0 = (\FQ\FP)_0
	\quad\text{for all $\FP,\FQ \in \CL{3}$,}
\label{E:ROTAT}
\end{equation}
and arguments can be rotated cyclically without changing the scalar value.


\subsection{Inverse elements}
\label{SS:IE}

One big advantage of using a Clifford algebra instead of $\C^4$ when discussing the Dirac equation, comes from the fact that elements in $\CL{3}$ may have multiplicative inverses. We say that $\FP \in \CL{3}$ has a multiplicative inverse if there exists $\FQ \in \CL{3}$ with the property that $\FP\FQ$ is a \emph{nonzero complex scalar}. Then we can define the right inverse $\FR \coloneq (\FP\FQ)^{-1} \FQ$ so that
\[
    \FP\FR = \FP \Big( (\FP\FQ)^{-1}\FQ \Big) = (\FP\FQ)^{-1} (\FP\FQ) = 1.
\]
The left inverse is defined analogously as $\FL \coloneq (\FQ\FP)^{-1} \FQ$. In order to find an element $\FQ$ as above, we make use of the spatial reversal because \eqref{E:PP3} shows that $\FP\overline{\FP}$ is always a scalar. Elements $\FP \in \CL{3}$ with non-vanishing $\FP\overline{\FP}$ are thus invertible with
\[
    \FP^{-1} = (\FP\overline{\FP})^{-1} \overline{\FP},
\]
which functions both as a left and right inverse because $\FP\overline{\FP} = \overline{\FP}\FP$; see \eqref{E:PP3}.


\subsection{Scalar product}

For a paravector $\FP = a+\BU$ with $(a,\vec{u}) \in \R \times \R^3$, we obtain from \eqref{E:PP3} the Minkowski spacetime length $\FP\overline{\FP} = a^2 - \|\vec{u}\|^2$. We call $\FP$
\begin{alignat*}{2}
    \text{timelike}
        &\quad& \text{if $\FP\overline{\FP} > 0$,} \\
    \text{lightlike or null}
        && \text{if $\FP\overline{\FP} = 0$,} \\
    \text{spacelike}
        && \text{if $\FP\overline{\FP} < 0$.}
\end{alignat*}
More generally, we introduce the scalar product
\begin{equation}
	\langle \FP,\FQ \rangle
		\coloneq (\FP\overline{\FQ})_0
		= \frac{1}{2} (\FP\overline{\FQ} + \FQ\overline{\FP})
	\quad\text{for all $\FP,\FQ \in \CL{3}$,}
\label{E:SCPR}
\end{equation}
using \eqref{E:SCALAR} and \eqref{E:ORDER}. Notice that \eqref{E:SCPR} is indeed always scalar because of \eqref{E:PP3}. It is symmetric in $\FP,\FQ$ and bilinear in both arguments. Using \eqref{E:EBA}, we find
\begin{equation}
	\langle \FE_\mu, \FE_\nu \rangle
		= \frac{1}{2} (\FE_\mu \overline{\FE_\nu} + \FE_\nu \overline{\FE_\mu})
		= \eta_{\mu\nu}
	\quad\text{for $\mu,\nu = 0\ldots 3$,}
\label{E:MINKO}
\end{equation}
with metric tensor $\eta_{00} = 1$, $\eta_{11} = \eta_{22} = \eta_{33} = -1$, and $\eta_{\mu\nu} = 0$ otherwise.

We define dual basis vectors
\begin{equation}
	\FE^0 \coloneq \FE_0
	\quad\text{and}\quad
	\text{$\FE^k \coloneq -\FE_k$ for $k=1\ldots 3$.}
\label{E:DUALB}
\end{equation}
Then the involutions of basis vectors can be expressed as
\begin{equation}
	\text{$\overline{\FE_\mu} = \widehat{\FE_\mu} = \FE^\mu$ for $\mu=0\ldots 3$.}
\label{E:RAISER}
\end{equation}
Then \eqref{E:MINKO} implies the orthogonality relation
\begin{equation}
	\langle \FE^\mu, \FE_\nu \rangle = {\delta^\mu}_\nu
	\quad\text{for $\mu,\nu = 0\ldots 3$.}
\label{E:ORTHO}
\end{equation}
The $\FE^\mu$ provide another orthogonal basis of $\CL{3}$. The metric tensor and $\eta^{\mu\nu} \coloneq \eta_{\mu\nu}$ can be used to lower/raise indices, thus to switch between the $\FE_\mu$ and $\FE^\mu$ bases.

\begin{remark}
We will write products of basis vectors in the form
\begin{equation}
	\FE_k \FE_l = \delta_{kl} + {c_{kl}}^m \FE_m
	\quad\text{for $k,l = 1\ldots 3$,}
\label{E:ANTICO}
\end{equation}
with ${c_{kk}}^m = 0$ for all indices, using the summation convention. For $k \neq l$, the ${c_{kl}}^m$ are the coefficients in the expansion of the product $\FE_k \FE_l$ with respect to the basis vectors $\FE_m$. Because of \eqref{E:STRUCTURE}, there holds ${c_{kl}}^m + {c_{lk}}^m = 0$ for all indices. We extend the notation consistent with \eqref{E:DUALB}. For example, we want to be able to write
\begin{equation}
\begin{aligned}
	{c_k}^{lm} \FE_m
		& \coloneq \FE_k \FE^l
		= -\FE_k \FE_l
		= -{c_{kl}}^m \FE_m
\\
	c_{klm} \FE^m
		& \coloneq {c_{kl}}^m \FE_m
		= - \sum_m {c_{kl}}^m \FE^m
\end{aligned}
\quad\text{for $k\neq l$,}
\label{E:STRZ}
\end{equation}
which requires ${c_k}^{lm}  = -{c_{kl}}^m$ and $c_{klm} = -{c_{kl}}^m$, by linear independence of $\FE_m$. Thus raising or lowering an index in ${c_{kl}}^m$ results in a change of sign.
\end{remark}

\begin{lemma}
\label{L:COMPLETENESS}
For all $\FP \in \CL{3}$ we have the expansion/basic Fierz identity
\begin{equation}
	\FP = \langle\FE^\mu, \FP\rangle \FE_\mu
		= -\frac{1}{2} \FE^\mu \overline{\FP} \FE_\mu.
\label{E:FIERZ}
\end{equation}
Formula \eqref{E:FIERZ} also holds with $\FE_\mu$ replaced by $\FE^\mu$ and vice versa.
\end{lemma}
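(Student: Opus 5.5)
The plan is to prove the two equalities in \eqref{E:FIERZ} separately, using only the orthogonality relation \eqref{E:ORTHO}, the linear structure of $\CL{3}$ over $\C$, and the structure equations \eqref{E:STRUCTURE}.

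For the first equality, recall that $\{\FE_\mu\}_{\mu=0\ldots3}$ is a basis of $\CL{3}$ over $\C$, so we write $\FP = p^\nu \FE_\nu$ with $p^\nu\in\C$. The scalar product \eqref{E:SCPR} is $\C$-bilinear, because spatial reversal is $\C$-linear: it does not conjugate $i$, by \eqref{E:SPATR}. Hence
\[
	\langle \FE^\mu, \FP\rangle = p^\nu \langle \FE^\mu,\FE_\nu\rangle = p^\nu {\delta^\mu}_\nu = p^\mu
\]
by \eqref{E:ORTHO}, and this gives $\FP = \langle\FE^\mu,\FP\rangle\FE_\mu$. The version with the roles of $\FE_\mu$ and $\FE^\mu$ exchanged follows identically from $\langle\FE_\mu,\FE^\nu\rangle = {\delta_\mu}^\nu$, which is the same relation after raising and lowering with $\eta$.

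For the second equality, both sides are $\C$-linear in $\FP$. Indeed $\FP\mapsto\overline{\FP}$ is $\C$-linear, and $i$ is central, so it commutes past $\FE^\mu$. It therefore suffices to check the identity on the basis $\FP=\FE_\nu$, that is, to verify
\[
	-\tfrac12\,\FE^\mu\,\FE^\nu\,\FE_\mu = \FE_\nu,
\]
using $\overline{\FE_\nu}=\FE^\nu$ from \eqref{E:RAISER}. Write $\FE^\mu\FE_\mu = \FE_0\FE_0 - \sum_k \FE_k\FE_k = 1-3 = -2$. For $\nu=0$ this gives $-\tfrac12(-2)=1=\FE_0$. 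For $\nu=k\geq1$, we have $\FE^k=-\FE_k$, so the left side is $\tfrac12\FE^\mu\FE_k\FE_\mu = \tfrac12\bigl(\FE_k - \sum_l \FE_l\FE_k\FE_l\bigr)$. By \eqref{E:STRUCTURE}, $\FE_l\FE_k\FE_l = -\FE_k$ for $l\neq k$ and $=\FE_k$ for $l=k$, so $\sum_l \FE_l\FE_k\FE_l = \FE_k - 2\FE_k = -\FE_k$. The total is $\tfrac12(2\FE_k)=\FE_k$, as required. The variant with upper and lower indices swapped, $-\tfrac12\FE_\mu\overline{\FP}\FE^\mu$, is handled the same way: the sum $\FE_\mu X\FE^\mu$ equals $\FE^\mu X\FE_\mu$ termwise, since the signs from $\FE^k=-\FE_k$ appear twice and cancel.

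The main point of care is the sign bookkeeping in the sandwich sum $\FE^\mu\FE^\nu\FE_\mu$, together with confirming that $\C$-linearity is legitimate, namely that spatial reversal commutes with multiplication by $i=\FE_1\FE_2\FE_3$. Both are settled directly by \eqref{E:SPATR} and \eqref{E:STRUCTURE}, so no genuine obstacle is expected.
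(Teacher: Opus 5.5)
Your proof is correct. The first equality is argued exactly as in the paper, but for the second equality you take a different route.

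The paper does not reduce to basis elements. It substitutes the definition $\langle\FE^\mu,\FP\rangle = \frac12(\FE^\mu\overline{\FP} + \FP\overline{\FE^\mu})$ into the expansion just proved, which gives $\FP = \frac12\FE^\mu\overline{\FP}\FE_\mu + \frac12\FP\,\overline{\FE^\mu}\FE_\mu$. It then uses $\overline{\FE^\mu}\FE_\mu = \sum_\mu \FE_\mu^2 = 4$ to obtain $\FP = \frac12\FE^\mu\overline{\FP}\FE_\mu + 2\FP$, which rearranges to the Fierz identity. That argument is shorter and shows the Fierz identity is just the completeness relation rewritten through the symmetric form of the scalar product, with no case distinction or explicit anticommutation.

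You instead verify $-\frac12\FE^\mu\FE^\nu\FE_\mu = \FE_\nu$ directly from \eqref{E:STRUCTURE} and extend by $\C$-linearity. This costs a case split ($\nu=0$ versus $\nu=k$) and some sign bookkeeping. In return it is independent of the first equality, and it makes explicit the $\C$-linearity of spatial reversal (in particular $\overline{i}=i$), which the paper uses only implicitly through bilinearity of $\langle\cdot,\cdot\rangle$.

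Your treatment of the swapped-index variant is also fine, using the termwise identity $\FE_\mu X\FE^\mu = \FE^\mu X\FE_\mu$; the paper does not spell that part out.
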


Statement \eqref{E:FIERZ} shows that spatial reversal (unlike complex conjugation) can be expressed purely in terms of algebra operations (addition, multiplication). Recall that for $C^*$-algebras the $*$ operation is required as an additional structure.

\begin{proof}
We apply $\langle \FE^\mu, \cdot\rangle$ to $\FP = p^\nu \FE_\nu$, then use linearity, \eqref{E:ORTHO} to obtain $p^\mu = \langle\FE^\mu, \FP\rangle$. Because of definition \eqref{E:SCPR}, $\overline{\FE^\mu} = \FE_\mu$ (see \eqref{E:RAISER}), and $\FE_\mu^2 = 1$, it follows that
\[
	\FP = \frac{1}{2} \left( \FE^\mu\overline{\FP}
			+ \FP\overline{\FE^\mu} \right) \FE_\mu
		= \frac{1}{2} \FE^\mu\overline{\FP}\FE_\mu
			+ \frac{1}{2} \FP \left( \overline{\FE^\mu}\FE_\mu \right)
		= \frac{1}{2} \FE^\mu\overline{\FP}\FE_\mu + 2\FP.
\]
Rearranging terms, we obtain the second statement in \eqref{E:FIERZ}.
\end{proof}

\begin{lemma}
\label{L:COFFS}
For any $\FP \in \CL{3}$ we have that
\[
	(\overline{\FP}\FE^\mu\widehat{\FP})_\nu
		= (\FP\FE_\nu\FP^\dagger)^\mu
	\quad\text{for $\mu,\nu = 0\ldots 3$.}
\]
These are the coefficients of the quantities with respect to the $\FE_\mu$ and $\FE^\nu$ basis vectors, respectively. The same statement holds with $\FE_\mu, \FE^\nu$ replaced by $\FE^\mu, \FE_\nu$.
\end{lemma}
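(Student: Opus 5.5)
The plan is to turn both coefficient extractions into scalar products and then compare the resulting scalar parts, using only cyclicity \eqref{E:ROTAT} and the fact that spatial reversal leaves scalar parts unchanged. First I fix the meaning of the two sides. Write $X \coloneq \overline{\FP}\FE^\mu\widehat{\FP} = X_\nu \FE^\nu$ and $Y \coloneq \FP\FE_\nu\FP^\dagger = Y^\mu\FE_\mu$. By \eqref{E:ORTHO} and the symmetry of $\langle\cdot,\cdot\rangle$, the coefficients are
\[
	X_\nu = \langle \FE_\nu, X\rangle
	\quad\text{and}\quad
	Y^\mu = \langle \FE^\mu, Y\rangle,
\]
exactly as in the first step of the proof of Lemma \ref{L:COMPLETENESS}. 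By \eqref{E:SCPR}, each of these is the scalar part $(\FA\overline{\FB})_0$ of a product.

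Next I need two auxiliary facts about the involutions, both checked directly on the decomposition \eqref{E:PEQ}. The first is that spatial reversal and complex conjugation commute, so that $\widehat{\FP} = \overline{\FP}^\dagger = \overline{\FP^\dagger}$ and hence $\overline{\widehat{\FP}} = \FP^\dagger$. The second is that $(\overline{\FQ})_0 = \FQ_0$ for every $\FQ$, which is immediate from \eqref{E:SCALAR}.

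With these facts I expand both sides using the order reversal \eqref{E:ORDER} together with \eqref{E:RAISER}. For the left side this gives
\[
	X_\nu = \big(\FE_\nu\,\overline{\overline{\FP}\FE^\mu\widehat{\FP}}\big)_0
		= \big(\FE_\nu\,\FP^\dagger\FE_\mu\FP\big)_0
		= \big(\FP\FE_\nu\FP^\dagger\FE_\mu\big)_0,
\]
where the last step is a cyclic rotation by \eqref{E:ROTAT}. For the right side it gives
\[
	Y^\mu = \big(\FE^\mu\,\overline{\FP\FE_\nu\FP^\dagger}\big)_0
		= \big(\FE^\mu\,\widehat{\FP}\FE^\nu\overline{\FP}\big)_0.
\]
Applying spatial reversal inside the scalar part, and using \eqref{E:ORDER} and \eqref{E:RAISER} once more, turns this into
\[
	Y^\mu = \big(\FP\,\FE_\nu\,\FP^\dagger\,\FE_\mu\big)_0,
\]
which coincides with the expression obtained for $X_\nu$.

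The version with $\FE_\mu, \FE^\nu$ interchanged follows by the identical computation, or simply because $\FE^\mu = \pm\FE_\mu$ with the same sign appearing on both sides. The only genuine point of care is the bookkeeping of the three involutions: the commutation of bar and dagger, and keeping track of which index sits up or down after each reversal. I expect no real obstacle beyond verifying $\overline{\widehat{\FP}} = \FP^\dagger$ on components.
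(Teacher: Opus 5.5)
Your proposal is correct and follows essentially the paper's argument. Both proofs write each coefficient as a scalar product via \eqref{E:SCPR}/\eqref{E:ORTHO}, use invariance of the scalar part under spatial reversal together with \eqref{E:ORDER}, \eqref{E:RAISER} and $\overline{\widehat{\FP}} = \FP^\dagger$, and rotate cyclically by \eqref{E:ROTAT}. The only difference is cosmetic: the paper runs one chain from the left side and reads off $(\FP\FE_\nu\FP^\dagger\overline{\FE^\mu})_0$ directly as $\langle \FP\FE_\nu\FP^\dagger, \FE^\mu\rangle$, while you reduce both sides separately to the common expression $(\FP\FE_\nu\FP^\dagger\FE_\mu)_0$.
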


\begin{proof}
We use Lemma~\ref{L:COMPLETENESS}, \eqref{E:SCPR}, and \eqref{E:RAISER} to compute the coefficients
\[
	(\overline{\FP}\FE^\mu\widehat{\FP})_\nu
		= \big( \overline{\FP}\FE^\mu\widehat{\FP} \overline{\FE_\nu} \big)_0
		= \Big( \overline{\overline{\FP}\FE^\mu\widehat{\FP} \overline{\FE_\nu}} \Big)_0
		= \big( \FE_\nu \FP^\dagger\overline{\FE^\mu}\FP \big)_0
		= \big( \FP\FE_\nu\FP^\dagger \overline{\FE^\mu} \big)_0
		= (\FP\FE_\nu\FP^\dagger)^\mu,
\]
where the second equality holds because the scalar part is invariant under spatial reversal, the fourth from rotating the arguments;  see \eqref{E:SPATR}, \eqref{E:ORDER}, and \eqref{E:ROTAT}.
\end{proof}


\subsection{Lorentz transforms}

For $\FL \in \CL{3}$ with $\FL\overline{\FL} = 1$, the map
\begin{equation}
	\FP \mapsto \FL\FP\FL^\dagger
	\quad\text{with $\FP \in \CL{3}$}
\label{E:LORE}
\end{equation}
represents a \emph{Lorentz transform}, with $\FL$ giving the motion and orientation of the object frame with respect to the observer. Note that $\FL\overline{\FL} = 1$ implies $\FL$ has left/right inverse $\FL^{-1} = \overline{\FL}$; see Section~\ref{SS:IE}. The general form is $\FL = \exp(\frac{1}{2}(\BW-i\BTHETA)$, where $\BW, \BTHETA$ are space vectors and $\exp$ is defined in terms of the power series. If $\FL$ is real (thus $\FL=\FL^\dagger$, $\BW \neq 0$, and $\BTHETA = 0$), then $\FL$ represents a boost
\[
	\FB \coloneq \exp(\BW/2) = \cosh(w/2) + \hat{\BW} \sinh(w/2)
\]
in the direction $\vec{w}/w$ with rapidity $w$, where $w \coloneq \|\vec{w}\|$ (recall \eqref{E:EMB}). If $\FL$ is even (thus $\FL = \widehat{\FL}$, $\BTHETA \neq 0$, and $\BW = 0$), then $\FL$ represents a rotation
\[
	\FR \coloneq \exp(-i\BTHETA/2) = \cos(\theta/2) -i \hat{\BTHETA} \sin(\theta/2)
\]
around the axis $\vec{\theta}/\theta$ with angle $\theta$, where $\theta \coloneq \|\vec{\theta}\|$. A Lorentz transform $\FL$ can always be written as the product $\FB\FR$ of a boost $\FB = (\FL\FL^\dagger)^{1/2}$ and a rotation $\FR = \overline{\FB}\FL$. We refer the reader to \cite{Baylis1996} for additional information. The transformation \eqref{E:LORE} preserves the reality of spacetime vectors and their spacetime length because
\[
	\FL\FP\FL^\dagger \, \overline{\FL\FP\FL^\dagger}
		= \FL\FP\FL^\dagger \, \overline{\FL^\dagger} \overline{\FP} \overline{\FL}
		= \FL\FP (\overline{\FL}\FL)^\dagger \overline{\FP} \overline{\FL}
		= (\FP\overline{\FP}) \, \FL\overline{\FL}
		= \FP\overline{\FP},
\]
as follows from $\FL\overline{\FL} = \overline{\FL}\FL = 1$, \eqref{E:ORDER}, and the fact that $\FP\overline{\FP}$ is always a scalar.


\subsection{Left-sided ideals}
\label{SS:LSI}

We define $P \coloneq \frac{1}{2}(1+\FE_3)$, which satisfies
\[
	P + \overline{P} = 1,
	\quad
	P\overline{P} = \overline{P}P = 0,
	\quad\text{and}\quad
	P = P^2 = \widehat{P};
\]
hence $P, \overline{P}$ are projectors. They define the two minimal left-sided ideals
\[
	\mathcal{S}_+ \coloneq \CL{3}P
	\quad\text{and}\quad
	\mathcal{S}_- \coloneq \CL{3}\overline{P}.
\]
The subspace $\mathcal{S}_+$ is spanned by the basis $\{\alpha_0 \coloneq P, \alpha_1 \coloneq \FE_1 P\}$ over $\C$. Indeed since $\FE_2 = i \FE_1\FE_3$ (see \eqref{E:ANTICO}) and $\FE_3 P = P$, for any $\FP = p^\mu \FE_\mu$ with $p^\mu \in \C$ we have
\begin{align*}
	\FP P
		& = (p^0 + p^3) P + (p^1 + ip^2) \FE_1 P,
\end{align*}
which proves the claim. The coefficients of $\FP P$ are thus given as
\[
	(\FP P)^0 = (\FP P)^3 = \frac{p^0 + p^3}{2}
	\quad\text{and}\quad
	(\FP P)^1 = i(\FP P)^2 = \frac{p^1 + ip^2}{2}.
\]
In particular, projection into the left-sided ideal changes the coefficients of all four basis vectors of $\CL{3}$. We observe that $P\FN P = 0$, which implies that
\begin{equation}
	P\FP P = (p^0+p^3) P = 2\FP_0 P
	\quad\text{for all $\FP \in \CL{3}$.}
\label{E:EXPA2}
\end{equation}
The two-sided ideal $P\mathcal{S}_+ = P\CL{3}P$ is therefore one-dimensional as a vector space of $\C$, thus isomorphic to $\C$; it contains only elements of the form $a P$ with $a \in \C$. By symmetry, analogous statements are true that use the projector $\OP$ instead.

For later reference, we record the following observation.

\begin{lemma}
\label{L:CUU}
For any $\FP \in \CL{3}$ we have that
\begin{equation}
	\Big( P \overline{\FP}\FE^\mu\widehat{\FP} P \Big)_0
		= (\widehat{\FP}P\overline{\FP})_\mu
		= (\FP\OP\FP^\dagger)^\mu
	\quad\text{for $\mu=0\ldots 3$.}
\label{E:LIFT}
\end{equation}
\end{lemma}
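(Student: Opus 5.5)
The plan is to reduce everything to scalar parts of products and then use the rotation property \eqref{E:ROTAT} and the invariance of scalar parts under spatial reversal. These are the same tools as in the proof of Lemma~\ref{L:COFFS}. Throughout, $\FQ_\mu$ and $\FQ^\mu$ are the coefficients of $\FQ$ with respect to $\FE^\mu$ and $\FE_\mu$, as in Lemma~\ref{L:COFFS}.

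First I will record two consequences of the scalar product \eqref{E:SCPR} and of Lemma~\ref{L:COMPLETENESS}. For any $\FQ\in\CL{3}$,
\[
	(\FQ\FE_\mu)_0 = \langle \FQ, \overline{\FE_\mu}\rangle = \langle \FQ,\FE^\mu\rangle = \FQ^\mu,
\]
using \eqref{E:RAISER} and \eqref{E:FIERZ}. In the same way, $(\FE^\mu\FQ)_0 = (\FQ\FE^\mu)_0 = \langle\FQ,\FE_\mu\rangle = \FQ_\mu$, using \eqref{E:ROTAT} and the version of \eqref{E:FIERZ} with upper and lower indices interchanged. These are routine, but I will check the sign conventions against \eqref{E:DUALB}, since that is where an error could slip in.

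For the first equality, set $\FQ \coloneq \overline{\FP}\FE^\mu\widehat{\FP}$. By \eqref{E:ROTAT} and $P^2=P$,
\[
	(P\FQ P)_0 = (\FQ PP)_0 = (\FQ P)_0 = (\overline{\FP}\FE^\mu\widehat{\FP}P)_0 = (\FE^\mu\,\widehat{\FP}P\overline{\FP})_0,
\]
where the last step rotates $\overline{\FP}$ to the end. By the coefficient identity above, this is $(\widehat{\FP}P\overline{\FP})_\mu$. One could alternatively use \eqref{E:EXPA2} here, but the rotation argument avoids losing track of the projector.

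For the second equality, I proceed as in Lemma~\ref{L:COFFS}. The scalar part is invariant under spatial reversal, and \eqref{E:ORDER} reverses the order of factors, so
\[
	(\FE^\mu\widehat{\FP}P\overline{\FP})_0
	= \big(\FP\,\OP\,\overline{\widehat{\FP}}\,\overline{\FE^\mu}\big)_0
	= (\FP\OP\FP^\dagger\FE_\mu)_0 .
\]
Here I use $\overline{\widehat{\FP}} = \FP^\dagger$, which follows from $\widehat{\FP}=\overline{\FP}^\dagger$ and the commuting of the two involutions, together with $\overline{\FE^\mu}=\FE_\mu$. By the first coefficient identity, the right-hand side is $(\FP\OP\FP^\dagger)^\mu$, which completes the chain.

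The main obstacle is bookkeeping rather than anything conceptual. One must get the index placement and signs right in $(\FQ\FE_\mu)_0$ versus $(\FE^\mu\FQ)_0$, and justify $\overline{\widehat{\FP}}=\FP^\dagger$ from the decomposition \eqref{E:PEQ}.
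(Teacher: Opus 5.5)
Your proof is correct and follows the paper's own argument. The first equality uses cyclic rotation \eqref{E:ROTAT} with $P^2=P$ to reach $(\FE^\mu\,\widehat{\FP}P\overline{\FP})_0$, identified as a coefficient via Lemma~\ref{L:COMPLETENESS}; the second uses invariance of the scalar part under spatial reversal, with $\overline{\widehat{\FP}}=\FP^\dagger$ and $\overline{\FE^\mu}=\FE_\mu$, and your explicit checks of $(\FQ\FE_\mu)_0=\FQ^\mu$ and $(\FE^\mu\FQ)_0=\FQ_\mu$ only make precise what the paper leaves implicit.
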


\begin{proof}
The first equality follows from \eqref{E:ROTAT}, \eqref{E:RAISER}, and $P^2=P$ because
\[
	\Big( P\overline{\FP} \FE^\mu \widehat{\FP}P \Big)_0
		= \Big( \overline{\FE_\mu} (\widehat{\FP}P \overline{\FP}) \Big)_0
		= (\widehat{\FP}P \overline{\FP})_\mu;
\]
see Lemma~\ref{L:COMPLETENESS}. The second equality then follows from the fact that the scalar part is invariant under spatial reversal; see \eqref{E:SPATR} and \eqref{E:ORDER}.
\end{proof}

\begin{remark}
When talking about spin as a model for qubits it is common to use the notation $\FE \equiv \FE_3$ and $\FN \equiv \FE_1$ instead, in which case the third (spatial) basis vector can be written as $i\FN\FE \equiv \FE_2$. The vector $\FE$ represents the direction of spin, whereas $\FN$ is called the NOT-operator. As is evident from the definition $\FE_1$ in \eqref{E:PAULI}, multiplication by $\FN \equiv \FE_1$ flips upper and lower components of a vector in $\C^2$, so that
\[
	\FN \begin{pmatrix} 1 \\ 0 \end{pmatrix} = \begin{pmatrix} 0 \\ 1 \end{pmatrix}
	\quad\text{and vice versa.}
\]
Multiplication by $\FN$ thus flips between the two states representing spin up (logical value $1$) and spin down (logical value $0$), as does the logical operation NOT.
\end{remark}


\subsection{Differential operators}

The $\CL{3}$ analogue of the gradient operator is
\[
	\nabla \coloneq \FE^\mu \partial_\mu
		= \begin{pmatrix} \partial_0-\partial_3 & -\partial_1+i\partial_2 \\
			-\partial_1-i\partial_2 & \partial_0+\partial_3 \end{pmatrix}.
\]
When $\nabla$ is applied to a scalar-valued function $f$ that depends smoothly on the time variable $x^0$ (with $c=1$) and the spatial variables $x^k$, $k=1\ldots 3$, then
\[
    \nabla f = (\partial_0 f) \FE^0 + (\partial_1 f) \FE^1
        + (\partial_2 f) \FE^2 + (\partial_3 f) \FE^3.
\]
Thus $\nabla f$ is the linear combination of partial derivatives $\partial_\mu f$ multiplied by the basis vectors $\FE^\mu$, as for the classical gradient. When applied to a $\CL{3}$-valued function $\phi$, however, then the $\FE^\mu$ interact with the function $\phi$ by algebra multiplication. Here it matters whether $\nabla$ is applied from the left or from the right because the algebra multiplication is not commutative. As we did for elements in $\CL{3}$ we can split the operator $\nabla$ into a time (scalar) and a spatial (vector) part, writing
\begin{equation}
    \nabla = \partial_0 + \BNABLA
    \quad\text{with}\quad
    \BNABLA = \FE^k \partial_k.
\label{E:NADEC}
\end{equation}
Applying the grade automorphism to $\nabla$ acts on the basis vectors, thereby reversing the sign of the vector part of $\nabla$; recall \eqref{E:IDD}. Hence $\widehat{\nabla} = \partial_0 - \boldsymbol{\nabla}$.


\section{Matrix representation}
\label{S:ME}

The most straightforward \emph{representation} of $\CL{3}$ identifies the Clifford algebra with the space $M_2(\C)$ of complex $(2\times 2)$-matrices. The Clifford product is matrix multiplication, and the basis vectors are given by the Pauli matrices
\begin{equation}
    \FE_1 \equiv \begin{pmatrix} 0 & 1 \\ 1 & 0 \end{pmatrix},
    \quad
    \FE_2 \equiv \begin{pmatrix} 0 & -i \\ i & 0 \end{pmatrix},
    \quad
    \FE_3 \equiv \begin{pmatrix} 1 & 0 \\ 0 & -1 \end{pmatrix}.
\label{E:PAULI}
\end{equation}
Indeed one can check that the $\FE_k$ satisfy equations \eqref{E:STRUCTURE} if the right-hand side is understood as multiplied by the unit $(2\times 2)$-matrix $\ONE \equiv \FE_0$. Then
\begin{equation}
    M = M^\mu \FE_\mu
		\equiv \begin{pmatrix} M^0+M^3 & M^1-iM^2 \\ M^1+iM^2 & M^0-M^3 \end{pmatrix}.
\label{E:COEFF}
\end{equation}
The conversion between matrix entries and basis coefficients is given by
\begin{align*}
	M = \begin{pmatrix} A & B \\ C & D \end{pmatrix}
    \quad\Longleftrightarrow\quad
	M^0 &= \frac{A+D}{2}, \quad M^3 = \frac{A-D}{2},
\\
	M^1 &= \frac{C+B}{2}, \quad M^2 = \frac{C-B}{2i}.
\end{align*}
The scalar part of $M$ (the coefficient of $\FE_0 \equiv \ONE$) is thus obtained as half the trace of $M$. It can be convenient to simply work with the representation $M_2(\C)$ instead of $\CL{3}$. Note that all matrices $\FE_\mu$ are Hermitean and the $\FE_k$ have zero trace.

\begin{remark}
\label{R:SFG}
For the Pauli matrices \eqref{E:PAULI} there holds $\FE_1\FE_2 = i\FE_3 = -\FE_2\FE_1$ and cyclic permutations. Therefore the structure relation \eqref{E:ANTICO} takes the form
\begin{equation}
	\FE_k \FE_l = \delta_{kl} - i \varepsilon_{klm} \FE^m
	\quad\text{for $k,l = 1\ldots 3$,}
\label{E:ANTICO2}
\end{equation}
where we have used \eqref{E:STRZ} and the Levi-Civita symbol
\begin{equation}
	\varepsilon_{klm} \coloneq \begin{cases}
		+1 & \text{if $(k,l,m) = (1,2,3)$ or cyclic permutations} \\
		-1 & \text{if $(k,l,m) = (3,2,1)$ or cyclic permutations} \\
		0  & \text{if any two indices are equal}
	\end{cases}.
\label{E:LC}
\end{equation}
In particular, we have $c_{klm} = -i \varepsilon_{klm}$ for $k\neq l$ and $m=1\ldots 3$. We define variants of $\varepsilon_{klm}$ with different combinations of sub/superscripts as we did for $c_{klm}$, keeping in mind that raising/lowering any index requires a sign change.
\end{remark}

In terms of the matrix representation $\CL{3} \equiv M_2(\C)$, we have
\begin{align}
	M = \begin{pmatrix} A & B \\ C & D \end{pmatrix}
    \quad\Longleftrightarrow\quad
	M^\dagger &= \begin{pmatrix} A^* & C^* \\ B^* & D^* \end{pmatrix},
	\quad
	\overline{M} = \begin{pmatrix} D & -B \\ -C & A \end{pmatrix},
\label{E:OVERM}
\\
	\widehat{M} &= \begin{pmatrix} D^* & -C^* \\ -B^* & A^* \end{pmatrix}.
\label{E:DEFMAT}
\end{align}
Thus $M^\dagger$ is indeed the transpose of the complex conjugation of $M$ and $\widehat{M} = \overline{M}^\dagger$. Recall that for a square matrix $M$ the adjugate matrix $\mathrm{adj}(M)$ is defined by
\begin{equation}
	M \mathrm{adj}(M) = \det(M) \ONE.
\label{E:ADJUG}
\end{equation}
While for larger matrices, finding the adjugate matrix involves computing determinants of submatrices, for $(2\times 2)$-matrices it is sufficient to shuffle matrix entries. We have that $\overline{M} = \mathrm{adj}(M)$. Notice that no complex conjugation is involved and the trace is preserved for the adjugate matrix. There holds
\begin{equation}
    M \overline{M} = \overline{M} M = \det(M) \ONE
	\quad\text{with}\quad
	\det(M) = AD-BC.
\label{E:DETM}
\end{equation}
In this case, the multiplicative inverse of $M$ exists because $M\overline{M}$ is a diagonal matrix with nonvanishing entries equal to $\det(M) \neq 0$; see Section~\ref{SS:IE}. In fact $(M\overline{M})^{-1} \overline{M}$ turns out to be just the matrix inverse of $M$ because of \eqref{E:ADJUG}.

It follows from \eqref{E:OVERM} that $M+\overline{M} = (A+D)\ONE$; see \eqref{E:OVERM}. Therefore the scalar product \eqref{E:SCPR} in the Clifford algebra $\CL{3} \equiv M_2(\C)$ takes the form
\[
    \langle M, N \rangle
		\coloneq \frac{1}{2} \TRACE(M\overline{N})
		= \frac{1}{2} \TRACE(N\overline{M})
    \quad\text{for $M,N \in M_2(\C)$,}
\]
where $\TRACE$ is the trace. Recall that $\overline{M\overline{N}} = N \overline{M}$ and $\TRACE(\overline{M}) = \TRACE(M)$; see \eqref{E:ORDER},\eqref{E:OVERM}. The factors of matrix products can be cyclically rotated in the trace; cf.\ \eqref{E:ROTAT}.

Because of \eqref{E:DETM}, the map $M \mapsto LML^\dagger$ for every Hermitean $M \in M_2(\C)$, with $L \in M_2(\C)$, represents a Lorentz transform if and only if
\begin{equation}
	\det(LML^\dagger) = \det(M).
\label{E:INVA}
\end{equation}
Indeed if $M = M^\mu \FE_\mu$ with coefficients $M^\mu \in \R$, then by \eqref{E:DETM} and \eqref{E:COEFF}
\begin{equation}
	\det(M) = (M^0)^2 - \|\vec{M}\|^2.
\label{E:DETT}
\end{equation}
A sufficient condition for \eqref{E:INVA} is that $L \in \mathrm{SL}(2,\C)$, the special linear group of all matrices in $M_2(\C)$ that have determinant equal to one. Since $(LML^\dagger)^\dagger = LML^\dagger$ if $M^\dagger = M$ the Lorentz transform preserves the reality of spacetime vectors.


The projection operators introduced in Section~\ref{SS:LSI} have the form
\[
	P = \begin{pmatrix} 1 & 0 \\ 0 & 0 \end{pmatrix}
	\quad\text{and}\quad
	\OP = \begin{pmatrix} 0 & 0 \\ 0 & 1 \end{pmatrix},
\]
therefore the minimal left-sided ideal $\mathcal{S}_+ = \CL{3}P$ consists of matrices, for which the right column vectors are zero. For $\mathcal{S}_- = \CL{3}\OP$ the left column vectors vanish.


\section{Nonlinear Dirac equation}
\label{S:IDE}

In this section, we will first show how the classical Dirac equation can be reformulated equivalently utilizing the space algebra $\CL{3}$. Then we will discuss a modified, nonlinear equation introduced by Daviau (see \cite{DaviauBertrandSocrounGirardot2022} and references therein), which he calls the improved Dirac equation because it fixes some of the peculiarities of the classical Dirac equation. This modified equation is the cornerstone of a far-reaching attempt by Daviau and coauthors at developing a unified theory of particle physics, as outlined in the book \cite{DaviauBertrandSocrounGirardot2022}. Our exposition is based on Chapter 1 of \cite{DaviauBertrandSocrounGirardot2022}. From now on, we will use the representation $\CL{3} \equiv M_2(\C)$, for notational convenience.

\medskip

We start with the classical Dirac equation
\begin{equation}
    \gamma^\mu D_\mu \psi + im \psi = 0
\label{E:CDIR}
\end{equation}
with $\psi$ a $\C^4$-valued function, called spinor, depending on spacetime variables
\[
    (x^0 = ct, \BX)
    \quad\text{with}\quad
    x^0 \in \R,
    \;
    \vec{x} = \begin{pmatrix} x^1 \\ x^2 \\ x^3 \end{pmatrix} \in \R^3.
\]
Physical constants are the speed of light $c$, the electric charge $e$ and the rest mass $m_0$ of the particle, and the Planck constant $h$, from which we form
\[
    \hbar \coloneq \frac{h}{2\pi},
    \quad
    m \coloneq \frac{m_0 c}{\hbar},
    \quad
    q \coloneq \frac{e}{\hbar c}.
\]
We use the covariant derivative $D_\mu \coloneq \partial_\mu + iq A_\mu$ to couple to an external electromagnetic field, with $A_\mu$ a given electromagnetic potential. We sum over $\mu = 0\ldots 3$ in \eqref{E:CDIR} (Einstein summation convention) and use the Weyl basis, thus
\begin{equation}
    \gamma^0 = \gamma_0 = \begin{pmatrix} 0 & \FE^0 \\ \FE^0 & 0 \end{pmatrix},
    \quad
    \gamma^k = -\gamma_k = \begin{pmatrix} 0 & \FE^k \\ -\FE^k & 0 \end{pmatrix},
    \;
    k = 1\ldots 3,
\label{E:WEYL}
\end{equation}
with $\FE^0 = \FE_0$ and $\FE^k = -\FE_k$, $k=1\ldots 3$, the Pauli matrices \eqref{E:PAULI}. The Weyl basis (also called chiral basis) has the advantage that the spinor $\psi$ can be naturally split into two parts satisfying equations that are only weakly coupled. Indeed let
\[
    \psi \eqcolon \begin{pmatrix} \xi \\ \eta \end{pmatrix}
    \quad\text{with}\quad
    \xi \eqcolon \begin{pmatrix} \xi_1 \\ \xi_2 \end{pmatrix},
    \eta \eqcolon \begin{pmatrix} \eta_1 \\ \eta_2 \end{pmatrix}
\]
taking values in $\C^2$. Then the block structure in \eqref{E:CDIR}/\eqref{E:WEYL} implies that
\begin{equation}
\begin{aligned}
    \FE^\mu (\partial_\mu + iq A_\mu) \eta + im \xi &= 0, \\
    \widehat{\FE^\mu} (\partial_\mu + iq A_\mu) \xi + im \eta &= 0;
\end{aligned}
\label{E:SYST}
\end{equation}
notice the minus sign in the lower left entry in $\gamma^k$ and recall \eqref{E:IDD}. We can combine these two equations into a single one by interpreting \eqref{E:SYST} as the left/right column vectors of a $(2\times 2)$-matrix valued identity. More precisely, let
\[
    R \coloneq \begin{pmatrix} 0 & -\eta_2^* \\ 0 & \eta_1^* \end{pmatrix}
    \quad\text{and}\quad
    L \coloneq \begin{pmatrix} \xi_1 & 0 \\ \xi_2 & 0 \end{pmatrix}
\]
be the left/right parts of the spinor wave. Recall the short-hand notation $\nabla = \FE^\mu \partial_\mu$ and define $A \coloneq A_\mu \FE^\mu$. Then the system \eqref{E:SYST} can be rewritten in the form
\begin{equation}
\begin{aligned}
    (\nabla + iq A) \widehat{R} + im L &= 0, \\
    (\widehat{\nabla} + iq \widehat{A}) L + im \widehat{R} &= 0.
\end{aligned}
\label{E:INTER}
\end{equation}
This is a matrix equation with the right column vectors vanishing; see \eqref{E:DEFMAT}. We apply the grade automorphism, which preserves the order of multiplication factors (see \eqref{E:ORDER}), to the second equation. The sign of $i$ changes under complex conjugation, and we use again \eqref{E:DEFMAT} on $L, R$. Then \eqref{E:INTER} can be rewritten as
\[
\begin{aligned}
    (\nabla + iq A) \widehat{R} + im L &= 0, \\
    (\nabla - iq A) \widehat{L} - im R &= 0.
\end{aligned}
\]
The first matrix equation has vanishing right column vector, for the second matrix equation the left column vector is zero. We can therefore combine the two equations into a single one by adding them, which gives the compact form
\begin{equation}
    \nabla\widehat{\phi} + iq A \widehat{\phi} \FE_3 + im \phi \FE_3 = 0
\label{E:DIRCL}
\end{equation}
for the $\CL{3}$-valued spinor field $\phi \coloneq L + R$. Here we have used multiplication from the right by $\FE_3$ to change the sign of the right column vectors. Notice that \eqref{E:DIRCL} is completely equivalent to the classical Dirac equation \eqref{E:CDIR}/\eqref{E:WEYL}. We can write
\[
	L = \phi P,
	\quad
	R = \phi \OP,
\]
with $P = \frac{1}{2}(1+\FE_3)$ the projector introduced in Section~\ref{SS:LSI}.

\medskip

We multiply \eqref{E:DIRCL} by the spatial reversal $\overline{\phi}$ and obtain
\begin{equation}
    \overline{\phi}\nabla\widehat{\phi}
		+ iq (\overline{\phi} A \widehat{\phi}) \FE_3
		+ im (\overline{\phi} \phi) \FE_3 = 0.
\label{E:INVDIR}
\end{equation}
As discussed in Section~\ref{S:SA}, this equation is equivalent to \eqref{E:DIRCL} as long as $\phi$ is invertible. Consider now a map $x \mapsto MxM^\dagger$ for some invertible complex $(2\times 2)$-matrix $M$. Suppose further that there exists another spinor field $\psi$ such that
\begin{equation}
	\phi(x) = M^{-1} \psi(y)
	\quad\text{with}\quad
	y = MxM^\dagger,
\label{E:TRAFO}
\end{equation}
for all spacetime vectors $x$. Using \eqref{E:ORDER} and the chain rule, we get that
\begin{equation}
	\overline{\phi}(x) \big( \nabla \widehat{\phi}(x) \big)
		= \overline{\phi}(x) \FE^\mu \frac{\partial \widehat{\phi}(x)}{\partial x^\mu}
		= \overline{\psi}(y) \Big( \overline{M^{-1}} \FE^\mu \widehat{M^{-1}} \Big)
			\frac{\partial\widehat{\psi}(y)}{\partial y^\kappa}
				\frac{\partial y^\kappa}{\partial x^\mu}.
\label{E:MEINS}
\end{equation}
Writing $x = x^\mu \FE_\mu$, we obtain $Mx M^\dagger = x^\mu D_\mu^\kappa \FE_\kappa$ with $D^\kappa_\mu \coloneq \langle \FE^\kappa, M\FE_\mu M^\dagger \rangle$. Thus
\begin{equation}
	y^\kappa = x^\mu D^\kappa_\mu
	\quad\text{and}\quad
	\frac{\partial y^\kappa}{\partial x^\mu} = D^\kappa_\mu
	\quad\text{for $\kappa, \mu = 0\ldots 3$.}
\label{E:MZWEI}
\end{equation}
Because of Lemma~\ref{L:COFFS}, there holds $D^\kappa_\mu = \langle \FE_\mu, \overline{M} \FE^\kappa \widehat{M} \rangle$, hence
\begin{equation}
	D^\kappa_\mu \FE^\mu = \overline{M} \FE^\kappa \widehat{M}
	\quad\text{for $\kappa = 0\ldots 3$.}
\label{E:MDREI}
\end{equation}
Then we use \eqref{E:MZWEI}/\eqref{E:MDREI} and $\overline{M^{-1}} = \overline{M}^{-1}$ in \eqref{E:MEINS} to conclude that
\[
	\overline{\phi}(x) \big( \nabla\widehat{\phi}(x) \big)
		= \overline{\psi}(y) \FE^\kappa \frac{\partial \widehat{\psi}(y)}{\partial y^\kappa}
		= \overline{\psi}(y) \big( \nabla\widehat{\psi}(y) \big).
\]
The derivative part of the Dirac equation \eqref{E:INVDIR} is therefore form invariant under the transformations \eqref{E:TRAFO}, which include the Lorentz transforms with $M \in \mathrm{SL}(2,\C)$, scalings with $M = e^{i\beta/2}\ONE$ for $\beta \in \R$, and $M = r \ONE$ with $r > 0$. The potential term in \eqref{E:INVDIR} transforms in an analogous way to preserve gauge invariance.

Using \eqref{E:TRAFO} in the mass term in \eqref{E:INVDIR}, we obtain
\[
	\overline{\phi}(x) \phi(x)
		= \overline{\psi}(y) \big( \overline{M} M) \psi(y)
		= \det(M) \, \overline{\psi}(y) \psi(y),
\]
so this term is form invariant if $M$ is a Lorentz transform, i.e., if $M \in \mathrm{SL}(2,\C)$. It is, however, not form invariant if $M = e^{i\beta/2}\ONE$ with $\beta \neq 0$, in which case $\det(M) = e^{i\beta}$. For this choice of $M$, spacetime does not change at all:
\[
	y = MxM^\dagger = x
	\quad\text{for all $x$,}
\]
only the spinor is multiplied by some global exponential factor. It would be desirable to have this invariance (called $\mathrm{U}(1)$ invariance) as well. It seems that this requires nonlinearity. There are various nonlinear extensions of the Dirac equations, like the Soler model \cites{Ivanenko1938, Soler1970}, for which the mass term becomes \emph{cubic} in the spinor.

\medskip

Following \cite{DaviauBertrandSocrounGirardot2022}, we will consider here the nonlinear Dirac equation
\begin{equation}
	\overline{\phi} \nabla\widehat{\phi}
		+ iq (\overline{\phi}A \widehat{\phi})\FE_3 + im N \FE_3 = 0
	\quad\text{with}\quad
	N \coloneq |\det(\phi)|.
\label{E:IMPINV}
\end{equation}
Recall that $\overline{\phi} \phi = \det(\phi)\ONE$, which typically is complex-valued. The novelty in \eqref{E:IMPINV} is to use the absolute value of the determinant. It is straightforward to check that $N$ is indeed $\mathrm{U}(1)$ invariant. Equation \eqref{E:IMPINV} is invariant under scaling $M = r\ONE$ with $r>0$ if the particle mass transforms according to $m \mapsto m |\det(M)|$. We refer the reader to \cite{DaviauBertrandSocrounGirardot2022} for further discussion and motivation of \eqref{E:IMPINV}.

The nonlinearity in \eqref{E:IMPINV} is benign in the sense that it has the same homogeneity as the original term $\overline{\phi} \phi$ (unlike the cubic Soler model). On the other hand, it is not differentiable. It can be rewritten in the following instructive form. We have
\[
	N \ONE
		= |\det(\phi)| \ONE
		= \Big( (\overline{\phi}\phi) (\overline{\phi}\phi)^\dagger \Big)^{1/2}
		= N^{-1} \Big( (\overline{\phi}\phi) (\overline{\phi}\phi)^\dagger \Big)
		= N^{-1} \Big( \overline{\phi} \big(\phi\phi^\dagger\big) \widehat{\phi} \Big).
\]
Notice that $\overline{\phi}\phi = \det(\phi) \ONE$ and the square root of a positive semidefinite diagonal matrix is well-defined. We now define the Dirac current $J \coloneq \phi\phi^\dagger$ and obtain
\begin{equation}
	N \ONE = \overline{\phi}V\widehat{\phi}
	\quad\text{with velocity}\quad
	V \coloneq N^{-1} J.
\label{E:PILOT}
\end{equation}
Decomposing $J = \FE^\mu J_\mu$, we have $J_0 \GS 0$ because $J$ is Hermitean positive semidefinite. As a consequence of \eqref{E:PILOT} and $P\GS 0$, we have that
\begin{equation}
	V_0 \GS 0
	\quad\text{where}\quad
	V = \FE^\mu V_\mu
\label{E:COMPA}
\end{equation}
(no negative energy). Defining $\det(\phi) \eqcolon N e^{i\beta}$, we can also write
\[
	N e^{-i\beta} \ONE = \big( \det(\phi) \big)^* \ONE
		= \big( \det(\phi) \ONE \big)^\dagger
		= (\overline{\phi}\phi)^\dagger
		= \phi^\dagger \widehat{\phi}.
\]
Multiplying from the left by $\phi$ and dividing by $P$ (assumed nonzero) we obtain
\[
	e^{-i\beta} \phi = N^{-1} \phi (\phi^\dagger \widehat{\phi})
		= N^{-1} (\phi \phi^\dagger) \widehat{\phi}
		= V \widehat{\phi}.
\]
The factor $V$ can therefore be used to perform the grade automorphism $\phi \mapsto \widehat{\phi}$, up to a complex exponential factor depending on $\phi$. Note that the grade automorphism is not a linear map because it involves a complex conjugation.

\medskip

The nonlinear Dirac equation \eqref{E:IMPINV} now takes the form
\begin{equation}
	\overline{\phi} \nabla\widehat{\phi}
		+ i\overline{\phi} \Big( qA + mV \Big) \widehat{\phi}\FE_3 = 0.
\label{E:ALMO}
\end{equation}
Assuming that $\phi$ is invertible, we can multiply \eqref{E:ALMO} from the left by $\phi$ and cancel the numerical factor $\det(\phi)$ to obtain the remarkably simple equation
\begin{equation}
\tcboxmath{ 
	\nabla\widehat{\phi} + i \Big( qA + mV \Big) \widehat{\phi} \FE_3 = 0.
} 
\label{E:DIRACEQ}
\end{equation}

The Dirac current $J = \phi\phi^\dagger$ is Hermitean positive semidefinite. We have
\begin{equation}
	N^2
	= |\det(\phi)|^2
	= \frac{1}{2} \TRACE\Big( (\overline{\phi}\phi) (\overline{\phi}\phi)^\dagger \Big)
	= \frac{1}{2} \TRACE\Big( (\phi\phi^\dagger) \overline{(\phi\phi^\dagger)} \Big)
	= \det(J),
\label{E:SAME}
\end{equation}
rotating trace arguments cyclically. The matrices in both traces are diagonal because of \eqref{E:ADJUG}. As a consequence, the velocity $V$ satisfies
\begin{equation}
	\det(V) = \det(N^{-1}J) = N^{-2} \det(J) = 1,
\label{E:NORMA}
\end{equation}
hence $V$ is timelike. Borrowing an idea of De Broglie, we think of $V$ as a pilot wave guiding the particle. Unlike the electromagnetic potential $A$, the field $V$ is not given externally but is defined implicitly in terms of the solution $\phi$ of \eqref{E:DIRACEQ} itself.


\subsection{Plane waves}

We are looking for solutions of \eqref{E:DIRACEQ} of the form
\begin{equation}
	\phi(x)
	= M e^{-i\varphi(x)\FE_3}
	= M \begin{pmatrix} e^{-i\varphi(x)} & 0 \\
		0 & e^{i\varphi(x)} \end{pmatrix},
	\quad
	x = (x^\mu) \in \R^4
\label{E:PW}
\end{equation}
for some $M \in \CL{3}$ and a real-valued function $\varphi$ to be determined. Then
\begin{gather*}
	N(x) = \big| \det\big( \phi(x) \big)\big| = |\det(M)|,
\\
	J(x) = \phi(x)\phi(x)^\dagger = MM^\dagger
\end{gather*}
are constant. The field $V \coloneq N^{-1} J$ appears in the nonlinear Dirac equation \eqref{E:DIRACEQ}, which therefore has constant coefficients (the electromagnetic potential is assumed to vanish for our construction). Using the ansatz \eqref{E:PW}, we observe that
\begin{align*}
	\nabla \widehat{\phi}(x)
		& = \FE^\mu \partial_\mu \Bigg(
			\widehat{M}
			\begin{pmatrix} e^{-i\varphi(x)} & 0 \\
				0 & e^{i\varphi(x)} \end{pmatrix} \Bigg)
\\
		& = -i \Big( \FE^\mu \partial_\mu \varphi(x) \Big) \Bigg(
			\widehat{M}
			\begin{pmatrix} e^{-i\varphi(x)} & 0 \\
				0 & -e^{i\varphi(x)} \end{pmatrix} \Bigg)
		= -i \Big( \FE^\mu \partial_\mu \varphi(x) \Big)
			\widehat{\phi}(x)\FE_3;
\end{align*}
recall \eqref{E:DEFMAT}/\eqref{E:ORDER}. Then $\phi$ is a solution of \eqref{E:DIRACEQ} provided that
\[
	\FE^\mu \partial_\mu \varphi(x) = mV
	\quad\text{hence}\quad
	\varphi(x) = m V_\mu x^\mu + \varphi_0,
\]
with $V = \FE^\mu V_\mu$ and $\varphi_0 \in \R$ some number. By \eqref{E:DETT}/\eqref{E:NORMA}, we have that
\[
	1 = \det(V) = V_0^2 - \|\vec{V}\|^2,
	\quad\text{with}\quad
	\vec{V} = (V_k) \in \R^3
\]
and $\|\cdot\|$ the Euclidean norm. It follows that $V_0 = \sqrt{1 + \|\vec{V}\|^2}$; see \eqref{E:COMPA}.

We emphasize the fact that the velocity field $V$ and the prefactor $M$ in \eqref{E:PW} are not independent of each other; this is different for the classical Dirac equation. In order to show how the quantities $N, J$ defined as
\begin{equation}
	N = |\det(M)|
	\quad\text{and}\quad
	J = MM^\dagger
\label{E:PV2}
\end{equation}
can be computed from (the complex matrix) $M$, consider
\[
	M = \begin{pmatrix} A & B \\ C & D \end{pmatrix}
	\quad\text{hence}\quad
	J = \begin{pmatrix} |A|^2+|B|^2 & AC^*+BD^* \\
		CA^*+DB^* & |C|^2+|D|^2 \end{pmatrix}
\]
and $N = |AD-BC|$. Expanding $J = \FE_\mu J^\mu$, we find that
\[
\begin{alignedat}{3}
	J^0 &= \frac{|A|^2 + |B|^2 + |C|^2 + |D|^2}{2},
	& \quad
	J^1 &= \frac{AC^* + BD^* + CA^* + DB^*}{2},
\\
	J^2 &= \frac{AC^* + BD^* - CA^* - DB^*}{2i},
	& \quad
	J^3 &= \frac{|A|^2 + |B|^2 - |C|^2 - |D|^2}{2}.
\end{alignedat}
\]
Then $J_0 = J^0$ and $J_k = -J^k$ because of \eqref{E:RAISER}. Note that indeed $J_0 \GS 0$.

For the converse direction, we first observe that we can always write $M = SU$ with $U$ unitary and $S$ Hermitean positive semidefinite. This is the polar decomposition of matrices. The quantities $N, J$ in \eqref{E:PV2} do not change when $M$ is replaced by $M U^\dagger$ for any $U$ unitary. Given $N, J$ we thus look for $M$ Hermitean positive semidefinite such that \eqref{E:PV2} holds. The matrix $M$ can be written as
\[
	M = \begin{pmatrix} A & Z \\ Z^* & D \end{pmatrix}
	\quad\text{hence}\quad
	J = \begin{pmatrix} A^2+|Z|^2 & (A+D)Z \\
		(A+D)Z^* & D^2+|Z|^2 \end{pmatrix},
\]
with $A,D \GS 0$, $Z\in\C$, and $AD \GS |Z|^2$. Using again $J = \FE_\mu J^\mu$, we obtain
\begin{equation}
\begin{alignedat}{3}
	J^0 &= \frac{A^2 + D^2 + 2|Z|^2}{2},
	& \quad
	J^1 &= \frac{(A+D) (Z+Z^*)}{2},
\\
	J^2 &= \frac{(A+D) (Z-Z^*)}{2i},
	& \quad
	J^3 &= \frac{A^2 - D^2}{2}.
\end{alignedat}
\label{E:SYST2}
\end{equation}
In order to solve this system for $A,B,Z$, we introduce new variables
\begin{equation}
	\left.\begin{alignedat}{3}
		M^0 &= \frac{A+D}{2},
		& \quad
		M^1 &= \frac{Z+Z^*}{2} \\
		M^2 &= \frac{Z-Z^*}{2i},
		& \quad
		M^3 &= \frac{A-D}{2}
	\end{alignedat}\right\}
	\quad\Longleftrightarrow\quad
	\left\{\begin{aligned}
		A &= M^0+M^3 \\
		D &= M^0-M^3 \\
		Z &= M^1+iM^2
	\end{aligned}\right.
\label{E:SECOND}
\end{equation}
which are the coefficients of $M$ in the basis $\FE_\mu$. Then \eqref{E:SYST2} takes the form
\begin{equation}
	J^0 = (M^0)^2 + (M^1)^2 + (M^2)^2 + (M^3)^2
	\quad\text{and}\quad
	J^k = 2M^0 M^k.
\label{E:FIRST}
\end{equation}
We substitute $M^k = J^k/(2M^0)$, then multiply by $(M^0)^2$ to obtain
\[
	\bigg( (M^0)^2 - \frac{J^0}{2} \bigg)^2
		= \bigg( \frac{J^0}{2} \bigg)^2
		- \bigg( \frac{J^1}{2} \bigg)^2
		- \bigg( \frac{J^2}{2} \bigg)^2
		- \bigg( \frac{J^3}{2} \bigg)^2
		= \frac{N^2}{4};
\]
see \eqref{E:DETT}/\eqref{E:SAME}. Since $0\LS N \LS J^0$, this can be solved for $M^0$ to give
\begin{equation}
	M^0 = \sqrt{\frac{J^0 \pm N}{2}}.
\label{E:REST2}
\end{equation}
Now $M^k$ can be computed from \eqref{E:FIRST}, after which $A,D,Z$ follow from \eqref{E:SECOND}. Notice that $M^0$ in \eqref{E:REST2} must indeed be nonnegative because $M^0 = \frac{1}{2}\TRACE(M)$ and $M$ is assumed Hermitean positive semidefinite. In \eqref{E:REST2} we prefer the $+$ sign for better numerical stability. We have $J^0 = N$ if $J^k = 0$ for $k=1\ldots 3$. It is straightforward to double check that \eqref{E:PV2} holds with this choice of parameters. Moreover $\det(M) \GS 0$ because $M$ is Hermitean positive semidefinite.


\subsection{Conservation laws}

Arguably, the quadratic form \eqref{E:ALMO} of the Dirac equation is the more fundamental one because only bilinear quantities formed by the spinor are physically accessible. Recall from Section~\ref{S:IDE} that \eqref{E:ALMO} is form invariant under a large class of transformations that includes the Lorentz transforms. While we found it convenient to consider $\CL{3}$ as a vector space over the complex numbers $\C$, it is important to remember that elements in the algebra split naturally into real and imaginary parts; recall the basis \eqref{E:BASIS} and the discussion in Section~\ref{S:INVO}. Each part has a separate expansion with respect to the basis vectors $\FE_\mu$.

The Dirac equation \eqref{E:ALMO} therefore amounts to \emph{eight} independent scalar equation that must be satisfied simultaneously: The coefficients of the real and imaginary parts in the expansion with respect to the basis vectors $\FE_\mu$ must be zero, by linear independence of basis vectors. To simplify the notation, we introduce
\[
	\Gamma \coloneq qA+mV
	\quad\text{with}\quad
	\Gamma \eqcolon \Gamma_\nu \FE^\nu.
\]
We also multiply \eqref{E:ALMO} by $-i$ to bring the imaginary unit to the differential operator, which is the correct form of the momentum operator of quantum mechanics. The following discussion is based on Sections~1.4/5 in \cite{DaviauBertrandSocrounGirardot2022}.

We then multiply by $\FE^\kappa$ and take the imaginary part, so that
\begin{equation}
	0 = \IM\Big( \FE^\kappa \OPHI \FE^\mu (-i\partial_\mu\WPHI) \Big)_0
		+ \Gamma_\mu \IM\Big( \FE^\kappa \OPHI \FE^\mu\WPHI\FE_3 \Big)_0.
\label{E:TWOIM}
\end{equation}
For the first term, we use \eqref{E:REIM}, \eqref{E:ROTAT} to obtain
\begin{align}
	\IM\Big( \FE^\kappa \OPHI \FE^\mu (-i\partial_\mu\WPHI) \Big)_0
		& = \frac{1}{2i} \bigg( \Big( \FE^\kappa \OPHI \FE^\mu (-i\partial_\mu\WPHI) \Big)
			- \Big( \FE^\kappa \OPHI \FE^\mu (-i\partial_\mu\WPHI) \Big)^\dagger \bigg)_0
\label{E:DIVC}
\\
		& = -\frac{1}{2} \bigg( \FE^\mu \Big( (\partial_\mu\WPHI)\FE^\kappa\OPHI
			+ \WPHI\FE^\kappa(\partial_\mu\OPHI) \Big) \bigg)_0
		= -\frac{1}{2} \partial_\mu \Big( \FE^\mu (\WPHI\FE^\kappa\OPHI) \Big)_0.
\nonumber
\end{align}
Because of \eqref{E:ORDER}, \eqref{E:RAISER} we can write $\WPHI\FE^\kappa\OPHI = \overline{\phi\FE_\kappa\phi^\dagger}$. Defining the currents
\begin{equation}
	D_\kappa \coloneq \phi\FE_\kappa\phi^\dagger
	\quad\text{for $\kappa=0\ldots 3$,}
\label{E:CURRY}
\end{equation}
we compute their coefficients with respect to the basis vectors $\FE_\mu$ as
\begin{equation}
	{D_\kappa}^\mu
		= \langle \FE^\mu, D_\kappa \rangle
		= \Big( \FE^\mu \overline{\phi\FE_\kappa\phi^\dagger} \Big)_0
		= \Big( \FE^\mu (\WPHI\FE^\kappa\OPHI) \Big)_0,
\label{E:CURRO}
\end{equation}
which we insert in \eqref{E:DIVC}. For the second term in \eqref{E:TWOIM}, we write
\begin{align*}
	\IM\Big( \FE^\kappa \OPHI \FE^\mu \WPHI \FE_3 \Big)_0
		& = \frac{1}{2i} \bigg( \Big( \FE^\kappa \OPHI \FE^\mu \WPHI \FE_3 \Big)
			- \Big( \FE^\kappa \OPHI \FE^\mu \WPHI \FE_3 \Big)^\dagger \bigg)_0
\\
		& = \frac{1}{2i} \bigg( \FE^\mu \Big( \WPHI(\FE_3\FE^\kappa -\FE^\kappa\FE_3) \OPHI \Big) \bigg)_0.
\end{align*}
If $\kappa = 0$ or $\kappa = 3$, then this term vanishes. Otherwise, we have
\begin{gather*}
	\FE_3\FE^1 = -\FE_3\FE_1 = -i\FE_2 = i\FE^2 = -\FE^1\FE_3,
\\
	\FE_3\FE^2 = -i\FE^1 = -\FE^2\FE_3.
\end{gather*}
Using again \eqref{E:CURRO}, we arrive at the conservation laws
\begin{equation}
\begin{alignedat}{4}
	\partial_\mu {D_0}^\mu & = 0 \qquad & \partial_\mu {D_1}^\mu & = +2\Gamma_\mu {D_2}^\mu
\\
	\partial_\mu {D_3}^\mu & = 0 \qquad & \partial_\mu {D_2}^\mu & = -2\Gamma_\mu {D_1}^\mu
\end{alignedat}
\label{E:FIRP}
\end{equation}
which is the first set of four equations that follow from the Dirac equation \eqref{E:ALMO}.
The conservation law $D_3$ has no correspondence for the classical Dirac equation. It is a consequence of the additional $\text{U}(1)$ invariance of the nonlinear Dirac equation. Notice that any linear combination of the currents $D_0$ and $D_3$ is still a conserved quantity. As a special case, we will write $J \coloneq D_0$ for the Dirac current.

We now multiply \eqref{E:ALMO} by $-i\FE^\kappa$ and take the real part to obtain
\begin{equation}
	0 = \RE\Big( \FE^\kappa \OPHI \FE^\mu (-i\partial_\mu\WPHI) \Big)_0
		+ \Gamma_\mu \RE\Big( \FE^\kappa \OPHI \FE^\mu\WPHI\FE_3 \Big)_0.
\label{E:CURSO}
\end{equation}
Then we compute
\begin{align*}
	\RE\Big( \FE^\kappa \OPHI \FE^\mu (-i\partial_\mu\WPHI) \Big)_0
		& = \frac{1}{2} \bigg( \Big( \FE^\kappa \OPHI \FE^\mu (-i\partial_\mu\WPHI) \Big)
			+ \Big( \FE^\kappa \OPHI \FE^\mu (-i\partial_\mu\WPHI) \Big)^\dagger \bigg)_0
\\
		& = -\frac{i}{2} \bigg( \FE^\mu \Big( (\partial_\mu\WPHI) \FE^\kappa \OPHI
			- \WPHI \FE^\kappa (\partial_\mu\OPHI) \Big) \bigg)_0.
\end{align*}
For the second term in \eqref{E:CURSO}, we use \eqref{E:DUALB}, \eqref{E:CURRO} again to obtain
\begin{align*}
	\RE\Big( \FE^\kappa \OPHI \FE^\mu\WPHI\FE_3 \Big)_0
		& = \frac{1}{2} \bigg( \Big( \FE^\kappa \OPHI \FE^\mu\WPHI\FE_3 \Big)
			+ \Big( \FE^\kappa \OPHI \FE^\mu\WPHI\FE_3 \Big)^\dagger \bigg)_0
\\
		& = \frac{1}{2} \bigg( \FE^\mu \Big( \WPHI (\FE_3 \FE^\kappa+\FE^\kappa \FE_3) \OPHI \Big) \bigg)_0.
\end{align*}
This term vanishes if $\kappa = 1$ or $\kappa = 2$. Otherwise, we have
\[
	\RE\Big( \OPHI \FE^\mu\WPHI\FE_3 \Big)_0 = -{D_3}^\mu
	\quad\text{and}\quad
	\RE\Big( \FE^3 \OPHI \FE^\mu\WPHI\FE_3 \Big)_0 = -{D_0}^\mu
\]
because of \eqref{E:DUALB}, \eqref{E:CURRO}. We can now insert these terms in \eqref{E:CURSO}, thereby obtaining the second set of four equations that follow from \eqref{E:ALMO}. Note that these equations together with \eqref{E:FIRP} are indeed equivalent to \eqref{E:ALMO} as they describe the coefficients of the real and imaginary parts of the Dirac equation in the $\FE_\mu$ basis.

Since $P + \OP = 1$ the Dirac current naturally splits
\[
	J = \phi\phi^\dagger
	= \phi P\phi^\dagger + \phi\OP\phi^\dagger
	\eqcolon \FJ_L + \FJ_R
\]
into the left and right chiral current, respectively, where
\[
	\FJ_L = \frac{1}{2} (D_0+D_3)
	\quad\text{and}\quad
	\FJ_R = \frac{1}{2} (D_0-D_3).
\]
In the same way, the energy-momentum tensor naturally splits into left and right contributions. Conversely, quantities for the Dirac particle can be reassembled from its left and right constituents. We refer the reader to  \cite{DaviauBertrandSocrounGirardot2022} for further discussion.

Let us also record the orthogonality relation
\[
	\langle D_\mu, D_\nu \rangle = N^2 \eta_{\mu\nu}
	\quad\text{for $\mu,\nu = 0\ldots 3$,}
\]
which follows from the fact that $\OPHI\phi = \det(\phi)$ (see \eqref{E:ADJUG}) and
\[
	\langle D_\mu, D_\nu \rangle
		= \Big( \phi\FE_\mu\phi^\dagger \overline{\phi\FE_\nu\phi^\dagger} \Big)_0
		= \Big( \FE_\mu (\OPHI\phi)^\dagger \overline{\FE_\nu} (\OPHI\phi)\Big)_0
		= |\det(\phi)|^2 \langle \FE_\mu,\FE_\nu \rangle.
\]
Here we have used \eqref{E:CURRY} in \eqref{E:SCPR}, \eqref{E:ORDER}, \eqref{E:MINKO}, and \eqref{E:IMPINV}.


\subsection{Global existence}

We prove global existence of solutions to a regularized version of the Dirac equation \eqref{E:DIRACEQ}. The regularization is necessary because the nonlinearity, which is homogeneous of degree one, is non-smooth in nodal points where $\det(\phi)$ vanishes. Our regularization replaces the nonlinearity by a \emph{Lipschitz-continuous} function of the spinor field $\phi$. Then we can apply the standard theory of semilinear evolution equations. Existence for the nonlinear Dirac equation without regularization will be explored elsewhere.

Let us start by introducing the relevant function spaces. Since we use the matrix representation of $\CL{3}$ introduced in Section~\ref{S:ME}, where elements of the Clifford algebra are identified with matrices in $M_2(\C)$, the Frobenius inner product induces a norm $\|\cdot\|$ on $\CL{3}$. We denote by $L^2(\R^3; M_2(\C))$ the Hilbert space of square integrable functions, and by $H^1(\R^3; M_2(\C))$ the associated Sobolev space of first order. For simplicity of notation, we will just write $L^2$ and $H^1$. We denote by $L^\infty$ the space of essentially bounded functions on $\R^3$, equipped with the canonical norm.

In the following, if $a$ and $b$ are two nonnegative quantities, we will write $a \lesssim b$ to indicate that $a \leq bc$ for some universal constant $c>0$.

\medskip

In order to define the regularized Dirac equation, we first observe that
\[
	V \widehat{\phi}
		= \big( N^{-1} \phi \phi^\dagger\big) \WPHI
		= \big( N^{-1} \det(\phi)^* \big) \phi
	\quad\text{with}\quad
	N = |\det(\phi)|
\]
because $\phi^\dagger \widehat{\phi} = (\overline{\phi} \phi)^\dagger = \det(\phi)^*$; recall \eqref{E:ADJUG}. For $\lambda > 0$ we define
\begin{equation}
	V_{\lambda}(\phi) \coloneq \frac{\det(\phi)^{\ast}}{N+\lambda\|\phi\|^2}
\label{E:REGV}
\end{equation}
for all $\phi \neq 0$. Then $|V_\lambda(\phi)| \LS 1$ for all $\phi \neq 0$, even in the limit $\| \phi \| \to 0$. Recalling \eqref{E:DIRACEQ}, we now consider the regularized Dirac equation
\begin{equation}
	\nabla \WPHI + i\Big( qA \WPHI + m V_\lambda(\phi) \phi \Big) \FE_3 = 0,
\label{E:DIRACREG}
\end{equation}
with ${A}$ an external electromagnetic potential specified below. Introducing
\begin{equation}
	F_{\lambda}(\phi) \coloneq \big( V_{\lambda}(\phi)-1 \big) \phi,
\label{E:SOUR}
\end{equation}
we can rewrite \eqref{E:DIRACREG} in the form
\[
 	\nabla \widehat{\phi} + i \Big( q A \widehat{\phi} + m \phi \Big) \FE_3
		+ i m F_{\lambda}(\phi) \FE_3 = 0.
\]
The first two terms on the left-hand side are precisely the ones from the classical Dirac equation \eqref{E:DIRCL}. Here the $F_\lambda(\phi)$-term acts as a source.

\begin{lemma}
\label{lem:estimate}
Given $\lambda>0$, let $\phi \mapsto V_\lambda(\phi)$ be defined by \eqref{E:REGV}. Then
\[
	|\partial_\phi V_{\lambda}(\phi)|
		\lesssim \frac{1+\lambda}{N + \lambda \|\phi\|^2} \|\phi\|
	\quad\text{for all $\phi$.}
\]
In particular, its derivative is bounded and so $V_\lambda$ is Lipschitz continuous.
\end{lemma}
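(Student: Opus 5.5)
Write $V_\lambda(\phi)=f(\phi)/g(\phi)$ with numerator $f(\phi)\coloneq\det(\phi)^*$ and denominator $g(\phi)\coloneq N+\lambda\|\phi\|^2$, $N=|\det(\phi)|$. The plan is to apply the quotient rule away from the set where $f$ or $g$ fails to be smooth, and to bound each piece by homogeneity. Here $\partial_\phi$ is understood as the real derivative with respect to the eight real coordinates of $\phi\in M_2(\C)$, and $|\cdot|$ as its operator norm.

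\emph{Derivatives of the ingredients.} The determinant $\det(\phi)=AD-BC$ is a complex quadratic polynomial in the entries. Its derivative is the linear map $h\mapsto \TRACE(\overline{\phi}h)$, using the adjugate $\overline{\phi}$ from \eqref{E:ADJUG}/\eqref{E:DETM}. Hence $|\partial_\phi\det(\phi)|\lesssim\|\phi\|$, and the same holds for $f=\det(\phi)^*$. The norm satisfies $|\partial_\phi\|\phi\|^2|=2\|\phi\|$. The modulus $N=|\det\phi|$ is not differentiable where $\det\phi=0$, but it is Lipschitz as a function of $\det\phi$ with constant $1$. So wherever $\det\phi\neq0$, and in the a.e./Clarke sense in general, $|\partial_\phi N|\le|\partial_\phi\det\phi|\lesssim\|\phi\|$. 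Together, $|\partial_\phi g|\lesssim(1+\lambda)\|\phi\|$.

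\emph{Quotient rule.} On the open set $\{\det\phi\neq0\}$ we have
\[
\partial_\phi V_\lambda=\frac{\partial_\phi f}{g}-\frac{f\,\partial_\phi g}{g^2}.
\]
The first term is bounded by $\|\phi\|/g$. For the second we use $|f|=N\le g$, so $|f|/g\le1$. It is therefore bounded by $|\partial_\phi g|/g\lesssim(1+\lambda)\|\phi\|/g$. Summing the two gives the claim.

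\emph{Main obstacle: the nodal set.} The difficult part is the set $\{\det\phi=0,\ \phi\neq0\}$ together with the point $\phi=0$. On the nodal set away from $0$, $g\ge\lambda\|\phi\|^2>0$ is still positive but only Lipschitz. I would handle this by noting that $V_\lambda$ is locally Lipschitz there: it is a quotient of Lipschitz functions with denominator bounded below. Its derivative therefore exists a.e., and the same bound holds a.e. (or for generalized gradients) by the computation above, with $\partial N$ replaced by any element of its generalized gradient. The resulting bound is $\lesssim(1+\lambda)\|\phi\|/(\lambda\|\phi\|^2)=(1+\lambda)/(\lambda\|\phi\|)$.

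\emph{The final "in particular".} This bound blows up as $\phi\to0$, so some care is needed to make sense of the boundedness claim. Using $g\ge\lambda\|\phi\|^2$, the general bound becomes $\lesssim(1+\lambda)/(\lambda\|\phi\|)$, so $V_\lambda$ is Lipschitz away from $0$ and bounded by $1$ everywhere. What is actually needed downstream is that $F_\lambda(\phi)=(V_\lambda(\phi)-1)\phi$ is Lipschitz. By the product rule,
\[
|\partial_\phi(V_\lambda\phi)|\le|V_\lambda|+\|\phi\|\,|\partial V_\lambda|\lesssim1+(1+\lambda)/\lambda,
\]
which is uniformly bounded. I would state the Lipschitz conclusion in this form, since $V_\lambda$ itself is $0$-homogeneous and cannot be continuous at $\phi=0$.
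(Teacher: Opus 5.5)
Your proposal is correct and is essentially the paper's argument: the quotient rule, Jacobi's formula with $\|\overline{\phi}\| = \|\phi\|$ for $\partial_\phi \det(\phi)$, the bounds $|\partial_\phi N| \LS |\partial_\phi \det(\phi)|$ and $|\partial_\phi \|\phi\|^2| \LS 2\|\phi\|$, and $N \LS N + \lambda\|\phi\|^2$. Your caveats are also well founded: the paper ignores that $N$ is not differentiable on $\{\det(\phi) = 0\}$, and its ``in particular'' clause is false as stated, because $V_\lambda$ is $0$-homogeneous and non-constant (e.g.\ $V_\lambda(t\ONE) = 1/(1+2\lambda)$ while $V_\lambda(tP) = 0$ for all $t > 0$); what Lemma~\ref{lem:nonlinearity_properties} actually uses is exactly your bound $\|\phi\|\,|\partial_\phi V_\lambda(\phi)| \lesssim 1 + \lambda^{-1}$, i.e.\ Lipschitz continuity of $\phi \mapsto V_\lambda(\phi)\phi$.
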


\begin{proof}
By the product/quotient rule, it follows from \eqref{E:REGV} that
\[
	|\partial_\phi V_{\lambda}(\phi)|
		\LS \frac{|\partial_\phi \det(\phi)|}{N + \lambda\|\phi\|^2}
		+ \frac{N}{N+\lambda\|\phi\|^2}
			\frac{|\partial_\phi N|+\lambda|\partial_\phi \|\phi\|^2|}{N + \lambda\|\phi\|^2}.
\]
Using Jacobi's formula for the derivative of a determinant, we have
\[
	|\partial_\phi \det({\phi})|
		\LS \| \mathrm{adj}(\phi) \|
		= \|\phi\|,
\]
with $\mathrm{adj}(\phi)$ the adjugate matrix; recall \eqref{E:OVERM}. Similarly
\[
	|\partial_\phi N|
		= \big| \partial_\phi|\det(\phi)| \big|
		\LS |\partial_\phi\det(\phi)|
		\LS \|\phi\|.
\]
Using the Cauchy-Schwarz inequality, we find that
\[
	\big| \partial_\phi \|\phi\|^2 \big|
		\LS 2 \|\phi\|.
\]
Combining these estimates, we obtain the stated result.
\end{proof}

\begin{lemma}
\label{lem:nonlinearity_properties}
Given $\lambda> 0$, let $\phi \mapsto F_\lambda(\phi)$ be defined by \eqref{E:SOUR}. Then
\begin{enumerate}
\item $F_\lambda \colon L^2 \longrightarrow L^2$ with
\[
	\|F_\lambda(\phi)\|_{L^2} \LS 2 \|\phi\|_{L^2};
\]
\item $F_\lambda \colon H^1 \longrightarrow H^1$ with
\[
	\|\nabla F_\lambda(\phi)\|_{L^2} \lesssim (\lambda^{-1}+1) \|\nabla \phi\|_{L^2};
\]
\item $F_\lambda$ is Lipschitz continuous on $L^2$ with
\[
	\|F_\lambda(\phi)-F_\lambda(\phi')\|_{L^2} \lesssim (1+\lambda^{-1} )\|\phi-\phi'\|_{L^2}.
\]
\end{enumerate}
\end{lemma}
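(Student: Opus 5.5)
The plan is to reduce all three statements to pointwise properties of the map $\phi\mapsto F_\lambda(\phi)=(V_\lambda(\phi)-1)\phi$ on $M_2(\C)$, and then integrate over $\R^3$. We set $F_\lambda(0)\coloneq 0$, which is consistent with the bound $|V_\lambda(\phi)|\LS 1$.

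For (1) we use that bound directly. Since $N=|\det(\phi)|$, we have $|V_\lambda(\phi)| \LS N/(N+\lambda\|\phi\|^2)\LS 1$, hence $\|F_\lambda(\phi)\|\LS (|V_\lambda(\phi)|+1)\|\phi\|\LS 2\|\phi\|$ pointwise. Squaring and integrating gives the $L^2$ bound.

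The core of (2) and (3) is a pointwise Lipschitz estimate for $F_\lambda$ as a map $M_2(\C)\to M_2(\C)$, viewed as a real-differentiable map away from the nodal set. By the product rule,
\[
	\partial_\phi F_\lambda(\phi)[h] = \big(\partial_\phi V_\lambda(\phi)[h]\big)\phi + \big(V_\lambda(\phi)-1\big)h .
\]
By Lemma~\ref{lem:estimate}, the first term is bounded by
\[
	\frac{1+\lambda}{N+\lambda\|\phi\|^2}\,\|\phi\|^2\|h\| \LS \frac{1+\lambda}{\lambda}\,\|h\| .
\]
The second term is bounded by $2\|h\|$. Therefore $|\partial_\phi F_\lambda(\phi)|\lesssim 1+\lambda^{-1}$ wherever $F_\lambda$ is differentiable.

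\textbf{Main obstacle.} $N=|\det\phi|$ is not differentiable where $\det\phi=0$, and $V_\lambda$ is singular at $\phi=0$. I would handle this by noting that $F_\lambda$ is continuous everywhere: near $0$ we have $\|F_\lambda(\phi)\|\LS 2\|\phi\|$, so $F_\lambda$ is continuous at $0$. Away from $\{\phi=0\}$, $F_\lambda$ is locally Lipschitz. This is because $N$ is Lipschitz (since $|\det|$ is the composition of a smooth map with $|\cdot|$) and the denominator is bounded below by $\lambda\|\phi\|^2>0$. Hence the derivative bound above holds almost everywhere on segments. For $\phi,\phi'$ we integrate along the segment $t\mapsto \phi'+t(\phi-\phi')$. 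If the segment meets $0$, we split it there and use continuity. If it passes through the set $\{\det=0\}$, we use that the restriction to the segment is absolutely continuous with the a.e.\ derivative bound; alternatively, we perturb the endpoints slightly and pass to the limit. The conclusion is the global pointwise Lipschitz bound
\[
	\|F_\lambda(\phi)-F_\lambda(\phi')\|\lesssim (1+\lambda^{-1})\|\phi-\phi'\| .
\]

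Statement (3) then follows by squaring and integrating over $\R^3$. For (2), $F_\lambda$ is a globally Lipschitz function on $\R^8\cong M_2(\C)$ with $F_\lambda(0)=0$. The chain rule for Lipschitz functions composed with Sobolev functions gives $F_\lambda(\phi)\in H^1$ for $\phi\in H^1$, with $|\partial_k F_\lambda(\phi)|\lesssim (1+\lambda^{-1})|\partial_k\phi|$ a.e. This is the vector-valued version of the chain rule: for Lipschitz $G$, $G\circ\phi\in W^{1,1}_{\mathrm{loc}}$ and $|D(G\circ\phi)|\LS \mathrm{Lip}(G)|D\phi|$, obtained from difference quotients. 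Integrating this bound yields $\|\nabla F_\lambda(\phi)\|_{L^2}\lesssim(\lambda^{-1}+1)\|\nabla\phi\|_{L^2}$. Here $\nabla$ means the spatial derivatives, since functions on $\R^3$ are considered. Combined with (1), this shows $F_\lambda\colon H^1\to H^1$.
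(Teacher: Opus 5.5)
Your proof is correct and follows essentially the paper's route. The derivative bound of Lemma~\ref{lem:estimate}, together with $\|\phi\|^2/(N+\lambda\|\phi\|^2)\LS\lambda^{-1}$, gives a pointwise bound $\lesssim 1+\lambda^{-1}$ on the derivative of $F_\lambda$; the mean value theorem turns this into (3), and the chain rule plus integration over $\R^3$ gives (2). Your extra care at $\phi=0$ (where $V_\lambda$, being homogeneous of degree zero, is discontinuous, so only $F_\lambda$ and not $V_\lambda$ is globally Lipschitz) and your use of the Lipschitz chain rule for $F_\lambda\circ\phi$, instead of differentiating $V_\lambda(\phi)$ directly, make rigorous steps the paper passes over.
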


\begin{proof}
Item~(1) follows immediately from the definition of $F_\lambda(\phi)$ and the fact that $|V_{\lambda}(\phi)|\LS 1$ for all $\phi\neq 0$. For item~(2), we can estimate a.e.
\begin{equation}
	\|\partial_k F_\lambda(\phi)\|
		\LS \big\| \big( \partial_k V_\lambda(\phi) \big) \phi \big\|
			+ \big\| \big( V_{\lambda}(\phi)-1 \big) \partial_k{\phi} \big\|
\label{E:PROD}
\end{equation}
for $k=1\ldots 3$, using the product/chain rule. By Lemma~\ref{lem:estimate}, we have a.e.
\[
	\big\| \big( \partial_k V_\lambda(\phi) \big) \phi \big\|
		\LS |\partial_k V_\lambda(\phi)| \|\phi\|
		\lesssim \frac{1+\lambda}{N + \lambda\|\phi\|^2} \|\phi\|^2 \|\partial_k\phi\|
		\LS (\lambda^{-1}+1) \|\partial_k\phi\|.
\]
Taking the square in \eqref{E:PROD} and integrating over $\R^3$, we obtain
\[
	\|\partial_k F_\lambda(\phi)\|_{L^2}
		\lesssim (\lambda^{-1}+1) \|\partial_k\phi\|_{L^2},
\]
where we have used again that $|V_{\lambda}(\phi)|\LS 1$. For item~(3), we argue as we did for (2) to estimate the derivative of $\phi \mapsto F_\lambda(\phi)$ in terms of $\lambda^{-1}+1$, then use the mean value theorem to establish the Lipschitz continuity of the map. Inserting the spinor field $\phi$ and integrating over $\R^3$, we obain the result.
\end{proof}

We decompose the electromagnetic potential $A$ and the operator $\nabla$ into their time (scalar) and spatial (vector) parts: $A \eqcolon A_0 + \BA$ and $\nabla \eqcolon \partial_0 + \BNABLA$, then rewrite the regularized Dirac equation (multiplied by $-i$) in the form
\[
	i\partial_0 \WPHI = H \WPHI + m F_\lambda(\phi),
\]
with Hamiltonian operator $H$ defined by
\[
	H\WPHI := \Big( -i\BNABLA \WPHI + q\BA \WPHI \FE_3 \Big) + \Big( qA_0 \WPHI + m\phi\FE_3 \Big).
\]
Note that the map $\phi \mapsto \WPHI$ is an isomorphism for both $L^2$ and $H^1$; recall \eqref{E:DEFMAT}.

Our goal is to prove global existence of the regularized Dirac equation: Fix $T>0$ and initial data $\phi_0 \in H^1$. Then find $\phi \in C([0,T];H^1) \cap C([0,T]; L^2)$ such that
\begin{equation}
	i\partial_0 \WPHI = H\WPHI + mF_\lambda(\phi)
	\quad\text{for $t\in[0,T]$,}\quad
	\phi(0) = \phi_0.
\label{E:IVP}
\end{equation}
If such a solution exists, then it satisfies
\[
	\WPHI(t) = \UO(t)\WPHI_0 + m \int_0^t \UO(t-s) F_\lambda\big( \phi(s) \big) \,ds
	\quad\text{for $t\in[0,T]$,}
\]
with $\{\UO(t)\}_{t\GS 0}$ the strongly continuous one-parameter unitary group on $L^2$ generated by the Hamiltonian $H$; see Lemma~4.1.1 in \cite{CazenaveHarauxMartel1998}. The existence of such a one-parameter group is well-known to be equivalent to self-adjointness of the operator $H$, because of Stone's theorem. For self-adjointness of $H$ we need suitable assumptions on $A_0, \BA$. For simplicity, here we only consider the case of a time-\emph{independent} electromagnetic potential satisfying the following conditions: for some $p>3$ we have
\[
	\BA \in L^p(\R^3) + L^\infty(\R^3)
	\quad\text{and}\quad
	\text{$|qA_0| \LS m$ a.e.}
\]
Then self-adjointness of $H$ follows from Theorem~3.3.7 in \cite{Tretter2008}; see also Remark~3.3.8 there. More general conditions will be considered elsewhere.

Existence of solutions to the inhomogeneous initial value problem \eqref{E:IVP} now follows from standard arguments from the theory of semilinear partial differential equations. Since the map $\phi \mapsto F_\lambda(\phi)$ is Lipschitz continuous on $L^2$ with an explicit bound on the Lipschitz constant (see Lemma~\ref{lem:nonlinearity_properties}), local well-posedness of \eqref{E:IVP} follows from Theorem~4.3.4 in \cite{CazenaveHarauxMartel1998}. Solutions exist as long as they remain finite in size. In our case, this is guaranteed for all finite times in both $L^2$ and $H^1$ because of items~(1), (2) in Lemma~\ref{lem:nonlinearity_properties} and Gronwall's lemma. Indeed the $L^2$- and $H^1$-norms of the solution $\phi$ do not grow faster than $\exp(2mt)$ and $\exp(cm(\lambda^{-1}+1)t)$, respectively, with $c>0$ some constant. We refer the reader to Section~4 of \cite{CazenaveHarauxMartel1998} for details. Note that for $\lambda\to 0$ the $H^1$-control of the solution degenerates, so for the non-regularized Dirac equation a suitable weak solution concept must be utilized.


\section{Hydrodynamics formulation}

In this section, we will derive a hydrodynamics formulation of Daviau's improved Dirac equation. There will be two separate parts to it, corresponding to the left and right spinor waves, that interact with each other via the Dirac current. For each part there will be two contributions, a chiral current and an energy-momentum tensor. Therefore the hydrodynamics formulation of the Dirac equation is rather complex. Note that the existence of the chiral current follows from Noether's theorem because of the invariance of the Dirac Lagrangian under gauge transformations, whereas the energy-momentum tensor is related to the invariance under spacetime translations. We will assume throughout that all functions are sufficiently smooth so that the following computations are justified. In this sense, the derivation is somewhat formal. A mathematically rigorous examination of the resulting model will be cconsidered elsewhere. We refer the reader to \cite{BialynickiBirula1995} for a similar result for the Weyl equation.


\subsection{Hydrodynamic equations}

We split the Dirac equation into left and right contributions, by multiplying \eqref{E:DIRACEQ} from the right by the operator $P = \frac{1}{2}(1+\FE_3)$ and its spatial reversal $\OP$, respectively, which project $\CL{3}$ into the corresponding minimal left-sided ideals; see Section~\ref{SS:LSI}. We obtain an equation
\begin{equation}
\tcboxmath{ 
	\nabla(\WPHI P) + i\Big( qA + mV \Big) (\WPHI P) = 0
} 
\label{E:SIMPL}
\end{equation}
for the spinor function $\WPHI P$ because $\FE_3 P = P$. For $\WPHI\OP$ we obtain a similar equation with a minus sign in the second term because $\FE_3\OP = -\OP$. We will write
\begin{equation}
	\Gamma \coloneq \Gamma_\kappa \FE^\kappa
	\quad\text{with}\quad
	\text{$\Gamma_\kappa \coloneq qA_\kappa + mV_\kappa$ for $\kappa = 0\ldots 3$.}
\label{E:DEFGA}
\end{equation}

\begin{lemma}
\label{L:MOME}
Suppose that $\phi$ is a solution of \eqref{E:DIRACEQ}. Recalling \eqref{E:DEFGA}, we define the chiral current $\FJ \coloneq \phi\OP\phi^\dagger$ and Tétrode's energy-momentum tensor $T$ by
\begin{equation}
	{T_\nu}^\mu
		\coloneq \RE\Big( P\OPHI \FE^\mu (-i\partial_\nu + \Gamma_\nu)\WPHI P \Big)_0
	\quad\text{for $\mu, \nu = 0\ldots 3$.}
\label{E:TETRODE}
\end{equation}
With tensor field $G \coloneq G_{\mu\nu} \FE^\mu \FE^\nu$ and $G_{\mu\nu} \coloneq \partial_\mu \Gamma_\nu - \partial_\nu \Gamma_\mu$, there holds
\[
\tcboxmath{ 
	\partial_\mu {T_\nu}^\mu = G_{\mu\nu} j^\mu
	\quad\text{for $\nu = 0\ldots 3$}
} 
\]
\end{lemma}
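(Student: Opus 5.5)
The plan is to reduce the statement to a Noether-type computation for the single left-ideal spinor $\psi \coloneq \WPHI P$. By \eqref{E:SIMPL}, $\psi$ satisfies the linear equation $\FE^\mu(\partial_\mu + i\Gamma_\mu)\psi = 0$ with the real coefficients $\Gamma_\mu$ from \eqref{E:DEFGA}. Multiplying by $-i$ and writing $D_\mu \coloneq -i\partial_\mu + \Gamma_\mu$, this becomes $\FE^\mu D_\mu\psi = 0$.

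Next I rewrite the tensor \eqref{E:TETRODE} in terms of $\psi$. Since $\widehat{\FP} = \overline{\FP}^\dagger$ and $P^\dagger = P$, we have $\psi^\dagger = P\,\widehat{\phi}^\dagger = P\OPHI$. Hence
\[
	{T_\nu}^\mu = \RE\big(\psi^\dagger \FE^\mu D_\nu\psi\big)_0 .
\]
Similarly, $\psi^\dagger\FE^\mu\psi = P\OPHI\FE^\mu\WPHI P$. By Lemma~\ref{L:CUU}, its scalar part equals $(\phi\OP\phi^\dagger)^\mu = j^\mu$, the coefficients of the chiral current $\FJ$. The matrix $\psi^\dagger\FE^\mu\psi$ is Hermitian because $\FE^\mu$ is, so this scalar part is automatically real.

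The core step is a commutator identity for the operators $D_\mu$. The potentials $A$ and $V$ are spacetime vectors, so $\Gamma_\nu$ is real and scalar. A direct computation gives $[D_\mu, D_\nu] = -i(\partial_\mu\Gamma_\nu - \partial_\nu\Gamma_\mu) = -iG_{\mu\nu}$. Set $\chi \coloneq D_\nu\psi$. Since $D_\nu$ commutes with left multiplication by the constant $\FE^\mu$, we get
\[
	\FE^\mu D_\mu\chi = D_\nu(\FE^\mu D_\mu\psi) + \FE^\mu[D_\mu,D_\nu]\psi = -iG_{\mu\nu}\FE^\mu\psi .
\]

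Then I compute $\partial_\mu(\psi^\dagger\FE^\mu\chi)$ with the product rule and substitute $\partial_\mu = i(D_\mu - \Gamma_\mu)$ in both factors. For the first factor, $(\partial_\mu\psi)^\dagger = -i(D_\mu\psi)^\dagger + i\Gamma_\mu\psi^\dagger$. The $\Gamma_\mu$-terms from the two factors cancel because $\Gamma_\mu$ is real. The remaining term $(D_\mu\psi)^\dagger\FE^\mu$ equals $(\FE^\mu D_\mu\psi)^\dagger$, again because $\FE^\mu$ is Hermitian, and so it vanishes by the Dirac equation. What is left is
\[
	\partial_\mu(\psi^\dagger\FE^\mu\chi) = i\,\psi^\dagger\FE^\mu D_\mu\chi = G_{\mu\nu}\,\psi^\dagger\FE^\mu\psi .
\]

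Finally I take the real scalar part of both sides. Both operations commute with $\partial_\mu$ and with multiplication by the real scalar $G_{\mu\nu}$, so this yields $\partial_\mu{T_\nu}^\mu = G_{\mu\nu}j^\mu$. I expect the main obstacle to be bookkeeping rather than ideas. One must check that the adjoint of the Dirac equation really produces the vanishing combination; this uses $\FE^{\mu\dagger} = \FE^\mu$, which holds for the Pauli-matrix representation. One must also track the sign conventions in $G_{\mu\nu}$ and match the index placement of $j^\mu$ with Lemma~\ref{L:CUU}. A wrong sign in $[D_\mu,D_\nu]$ would flip the right-hand side, so I would double-check it against the plane wave solutions \eqref{E:PW}, where $G = 0$ and $T$ is constant.
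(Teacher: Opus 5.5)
Your proof is correct, and each step checks out: $\psi^\dagger = P\OPHI$, $[D_\mu,D_\nu] = -iG_{\mu\nu}$, the $\Gamma_\mu$-terms cancel, and $\sum_\mu (D_\mu\psi)^\dagger\FE^\mu = 0$ by the adjoint of \eqref{E:SIMPL}. The underlying mechanism is the same as in the paper: differentiate \eqref{E:SIMPL} in $x^\nu$ and pair it with its Hermitian adjoint. The bookkeeping is organized differently, though.

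The paper first expands the real part to split ${T_\nu}^\mu$ into an antisymmetrized kinetic term plus $\Gamma_\nu j^\mu$; see \eqref{E:NULL}. The divergence of $\Gamma_\nu j^\mu$ then needs $\partial_\mu j^\mu = 0$, which the paper borrows from Lemma~\ref{L:CURR}, proved afterwards. The kinetic term is handled by cyclic rotations inside the scalar part together with \eqref{E:SWITCH}. This yields $-(\partial_\nu\Gamma_\kappa)j^\kappa$, while the potential term supplies $(\partial_\mu\Gamma_\nu)j^\mu$.

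You instead keep $\Gamma_\nu$ inside the covariant derivative. You differentiate the whole bilinear $\psi^\dagger\FE^\mu D_\nu\psi$ before taking real scalar parts. Both halves of $G_{\mu\nu}$ then come out of the single curvature identity $[D_\mu,D_\nu] = -iG_{\mu\nu}$.

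Your route is self-contained: it needs neither current conservation nor the cyclic-rotation property \eqref{E:ROTAT}. It is manifestly gauge covariant, and it proves the identity for the full $P\CL{3}P$-valued bilinear. Its imaginary part only reproduces $\partial_\nu(\partial_\mu j^\mu)=0$.

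The paper's route, in exchange, produces the explicit decomposition \eqref{E:NULL} of $T$ as kinetic part plus $\Gamma_\nu j^\mu$. That decomposition is reused in Lemmas~\ref{L:CURR} and \ref{L:EXPR} and in the proof of the quantization condition.
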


Note that $G$ contains the electromagnetic tensor $F_{\mu\nu} = \partial_\mu A_\nu - \partial_\nu A_\mu$.


\begin{proof}
We observe first that for $\mu, \nu = 0\ldots 3$
\[
	{T_\nu}^\mu = \frac{1}{2} \bigg( \Big( P\OPHI \FE^\mu (-i\partial_\nu + \Gamma_\nu)\WPHI P \Big)
		+ \Big( P\OPHI \FE^\mu (-i\partial_\nu + \Gamma_\nu)\WPHI P \Big)^\dagger \bigg)_0.
\]
We consider the two parts separately. We have
\begin{equation}
	\Big( P\OPHI \FE^\mu (-i\partial_\nu + \Gamma_\nu)\WPHI P \Big)
	= -i \Big( P\OPHI \FE^\mu (\partial_\nu\WPHI)P \Big)
	+ \Gamma_\nu \Big( P\OPHI \FE^\mu \WPHI P \Big).
\label{E:FIRR}
\end{equation}
Because of Lemma~\ref{L:CUU} and the definition of the chiral current, the scalar part of the last term on the right-hand side of \eqref{E:FIRR} is $\Gamma_\nu j^\mu$. Using \eqref{E:FIRR}, we obtain
\[
	\Big( P\OPHI \FE^\mu (-i\partial_\nu + \Gamma_\nu)\WPHI P \Big)^\dagger
	= i \Big( P(\partial_\nu\OPHI) \FE^\mu \WPHI P \Big)
	+ \Gamma_\nu \Big( P\OPHI \FE^\mu \WPHI P \Big).
\]
Recall that $\FE^\mu$ and $P$ are real, and so is $\Gamma_\nu$. Combining terms, we arrive at
\begin{equation}
	{T_\nu}^\mu = -\frac{i}{2} \bigg( \FE^\mu \Big(
		(\partial_\nu\WPHI)P\OPHI - \WPHI P(\partial_\nu\OPHI) \Big) \bigg)_0
	+ \Gamma_\nu j^\mu
\label{E:NULL}
\end{equation}
 (cf.\ (1.284)/(1.285) in \cite{DaviauBertrandSocrounGirardot2022}), where we have used \eqref{E:ROTAT} to rotate arguments cyclically in the scalar part, and $P^2 = P$. We now compute the spacetime divergence
\[
	\partial_\mu {T_\nu}^\mu
	= -\frac{i}{2} \bigg( \FE^\mu \partial_\mu \Big(
		(\partial_\nu\WPHI)P\OPHI - \WPHI P(\partial_\nu\OPHI) \Big) \bigg)_0
	+ \partial_\mu (\Gamma_\nu j^\mu).
\]
For the last term on the right-hand side, we observe that
\[
	\partial_\mu (\Gamma_\nu j^\mu)
	= (\partial_\mu \Gamma_\nu) j^\mu + \Gamma_\nu (\partial_\mu j^\mu)
	= (\partial_\mu \Gamma_\nu) j^\mu
\]
because the chiral current $j$ is conserved; see Lemma~\ref{L:CURR} below. Next we get
\[
	\bigg( \FE^\mu \partial_\mu \Big(
		(\partial_\nu\WPHI)P\OPHI \Big) \bigg)_0
	= \bigg( \Big( \partial_\nu (\nabla\WPHI) \Big)P\OPHI
		+ (\partial_\nu\WPHI)P (\nabla\WPHI)^\dagger \bigg)_0,
\]
rotating arguments cyclically in the scalar part and using that
\begin{equation}
	(\partial_\mu\OPHI) \FE^\mu
	= \Big( \FE^\mu (\partial_\mu\WPHI) \Big)^\dagger
	= (\nabla\WPHI)^\dagger.
\label{E:SWITCH}
\end{equation}
Recall that $\FE^\mu \partial_\mu = \nabla$ and $\nabla \partial_\nu = \partial_\nu \nabla$. Since $\WPHI P$ satisfies \eqref{E:SIMPL}, we obtain
\begin{align*}
	\Big( \partial_\nu (\nabla\WPHI) \Big) P
	& = -i \Big( (\partial_\nu\Gamma) \WPHI + \Gamma (\partial_\nu\WPHI) \Big) P,
\\
	P (\nabla\WPHI)^\dagger
	& = i P \OPHI \Gamma
\end{align*}
because $\Gamma$ is real. Combining terms, we find
\begin{align*}
	\bigg( \FE^\mu \partial_\mu \Big(
		(\partial_\nu\WPHI)P\OPHI \Big) \bigg)_0
	& = -i \bigg( \Big( (\partial_\nu\Gamma) \WPHI + \Gamma (\partial_\nu\WPHI) \Big) P\OPHI
		- (\partial_\nu\WPHI) P\OPHI \Gamma \bigg)_0
\\
	& = -i \Big( (\partial_\nu\Gamma) \WPHI P\OPHI \Big)_0,
\end{align*}
after rotating arguments and cancellation. Analogously, we obtain
\begin{align*}
	\bigg( \FE^\mu \partial_\mu \Big(
		\WPHI P(\partial_\nu\OPHI) \Big) \bigg)_0
	& = -i \bigg( \Gamma \WPHI P (\partial_\nu\OPHI)
		- \WPHI P \Big( (\partial_\nu\OPHI) \Gamma + \OPHI \big( \partial_\nu\Gamma \big) \Big) \bigg)_0
\\
	& = i \Big( (\partial_\nu\Gamma) \WPHI P\OPHI \Big)_0.
\end{align*}
Writing $\Gamma = \Gamma_\kappa \FE^\kappa$ and collecting terms, we observe that
\[
	\partial_\mu {T_\nu}^\mu
	= -(\partial_\nu\Gamma_\kappa) \Big( \FE^\kappa (\WPHI P\OPHI) \Big)_0
	+ (\partial_\mu \Gamma_\nu) j^\mu.
\]
The result now follows from the definition of $\FJ$ and Lemma~\ref{L:CUU}.
\end{proof}

\begin{remark}
The trace of the energy-momentum tensor is given by
\[
	{T_\mu}^\mu = -\frac{i}{2} \bigg( \FE^\mu \Big(
		(\partial_\mu\WPHI)P\OPHI - \WPHI P(\partial_\mu\OPHI) \Big) \bigg)_0
	+ \Gamma_\mu j^\mu;
\]
see \eqref{E:NULL}. Using \eqref{E:SWITCH}, \eqref{E:SIMPL}, and the definition of the chiral current, we obtain
\[
	{T_\mu}^\mu
	= -\frac{i}{2} \Big( (\nabla\WPHI) P\OPHI - \WPHI P (\nabla\WPHI)^\dagger \Big)_0
	+ \Gamma_\mu j^\mu
	= -\Gamma_\kappa \Big( \FE^\kappa \WPHI P\OPHI \Big)_0
	+ \Gamma_\mu j^\mu
	= 0.
\]
\end{remark}

\begin{lemma}
\label{L:CURR}
Let $\phi$, $\FJ$, and $T$ be as in Lemma~\ref{L:MOME}. Then $\partial_\mu j^\mu = 0$ and
\begin{equation}
\tcboxmath{ 
	\partial_0 j^k + \partial_k j^0 + 2 {\varepsilon^{kl}}_n {T_l}^n = 0
	\quad\text{for $k = 1\ldots 3$.}
} 
\label{E:CURRENT}
\end{equation}
\end{lemma}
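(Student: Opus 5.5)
The plan is to derive both identities from the chiral equation \eqref{E:SIMPL} satisfied by $\WPHI P$, together with its conjugate form obtained from \eqref{E:SWITCH}:
\[
	P(\nabla\WPHI)^\dagger = iP\OPHI\Gamma .
\]
First I rewrite the chiral current as a scalar part. By Lemma~\ref{L:CUU}, \eqref{E:ROTAT} and $P^2=P$,
\[
	j^\mu = \big(\FE^\mu\,\WPHI P\OPHI\big)_0 .
\]
This is the same object that appears in \eqref{E:NULL} and in the last step of the proof of Lemma~\ref{L:MOME}, where (after raising and lowering indices as in \eqref{E:RAISER}) it is identified with $j^\mu$ via Lemma~\ref{L:CUU}. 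For the conservation law I differentiate, rotate cyclically, and use \eqref{E:SWITCH}:
\[
	\partial_\mu j^\mu = \Big( (\nabla\WPHI)P\OPHI + \WPHI P(\nabla\WPHI)^\dagger \Big)_0 .
\]
Inserting \eqref{E:SIMPL} and its conjugate gives
\[
	\partial_\mu j^\mu = \big(-i\Gamma\WPHI P\OPHI + i\WPHI P\OPHI\Gamma\big)_0 ,
\]
which vanishes by \eqref{E:ROTAT}. This is exactly the cancellation already used in the proof of Lemma~\ref{L:MOME}, so this part is routine.

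For \eqref{E:CURRENT}, I fix $k$ and consider the scalar quantity
\[
	X_k \coloneq \partial_\mu\Big(\tfrac12\big(\FE^k\FE^\mu+\FE^\mu\FE^k\big)\,\WPHI P\OPHI\Big)_0 .
\]
By the structure relations \eqref{E:STRUCTURE}/\eqref{E:ANTICO2}, $\frac12(\FE^k\FE^\mu+\FE^\mu\FE^k)$ equals $\FE^k$ if $\mu=0$, $\FE^0$ if $\mu=k$, and $0$ otherwise. Hence $X_k = \partial_0 j^k+\partial_k j^0$, where I use the raising/lowering conventions of \eqref{E:DUALB} to get the index positions right. Next, I distribute the derivative onto the two factors $\WPHI$ and $\OPHI$. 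In the term where $\partial_\mu$ falls on $\WPHI$, I write $\FE^k\FE^\mu = \frac12\{\cdot,\cdot\} + \frac12[\cdot,\cdot]$. In the term where it falls on $\OPHI$, I first rotate cyclically so that the derivative sits as $(\partial_\mu\OPHI)\FE^\mu$. Each term then splits into two pieces. The first pieces assemble into $\big(\FE^k(\nabla\WPHI)P\OPHI\big)_0 + \big(\WPHI P(\nabla\WPHI)^\dagger\FE^k\big)_0$. The second pieces are commutator terms of the form $\FE^k\FE^l-\FE^l\FE^k = -2i\varepsilon_{klm}\FE^m$ from \eqref{E:ANTICO2}, multiplied by $(\partial_l\WPHI)P\OPHI$ and by $\WPHI P(\partial_l\OPHI)$, with opposite relative sign.

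The $\nabla$-pieces are eliminated with \eqref{E:SIMPL} and its conjugate, exactly as in the conservation law. The difference is that the factor $\FE^k$ now blocks complete cancellation, leaving
\[
	\big(-i\,\FE^k\Gamma\WPHI P\OPHI + i\,\WPHI P\OPHI\Gamma\FE^k\big)_0 = -i\big([\FE^k,\Gamma]\,\WPHI P\OPHI\big)_0 .
\]
Again only the commutator part of $\FE^k\FE^l$ survives, which produces a term of the form $\varepsilon\,\Gamma_l\, j^m$. The derivative commutator terms, in turn, are of the form
\[
	-\tfrac{i}{2}\varepsilon^{kl}{}_m\Big(\FE^m\big((\partial_l\WPHI)P\OPHI - \WPHI P(\partial_l\OPHI)\big)\Big)_0 ,
\]
up to constants. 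By \eqref{E:NULL} this equals $\varepsilon^{kl}{}_m\big({T_l}^m-\Gamma_l j^m\big)$. The $\Gamma_l j^m$ contribution from here combines with the one from the $\nabla$-pieces. They should cancel, or combine so that the total is exactly $-2\varepsilon^{kl}{}_n{T_l}^n$, which gives \eqref{E:CURRENT}.

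The main obstacle is the sign and factor bookkeeping in this last step. Three sources of signs interact: the signs of $\varepsilon$ under raising and lowering indices (Remark~\ref{R:SFG}), the identity $\FE^k\FE^l=\FE_k\FE_l$, and the conjugation reversing the order of factors in \eqref{E:SWITCH}. These must produce exactly the combination in \eqref{E:NULL}, with prefactor $2$, and a correct cancellation of the $\Gamma j$ terms. I would check this first on the single component $k=3$ by working in the matrix representation, and only then write out the general index computation.
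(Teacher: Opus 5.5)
Your proposal is correct and is essentially the paper's argument written covariantly. Instead of solving \eqref{E:SIMPL} for $\partial_0(\WPHI P)$ and expanding $\FE^k\FE^l$ via \eqref{E:ANTICO2}, you complete $\FE^k\nabla$ through the anticommutator/commutator split, which uses the same ingredients (\eqref{E:SIMPL}, \eqref{E:SWITCH}, \eqref{E:ANTICO2}, \eqref{E:NULL}) and the same cancellation of the $\Gamma_l j^n$ terms. The bookkeeping you were unsure about does close: the $\nabla$-pieces give $-2{\varepsilon^{kl}}_n\Gamma_l j^n$ and the commutator pieces give $-2{\varepsilon^{kl}}_n({T_l}^n-\Gamma_l j^n)$, so their sum is exactly $-2{\varepsilon^{kl}}_n{T_l}^n$.
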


\begin{proof}
Using \eqref{E:SIMPL}, $\BGAMMA \coloneq q\BA + m \BVV$, and $\nabla = \partial_0 + \BNABLA$ (see \eqref{E:NADEC}), we have
\begin{equation}
\begin{aligned}
	\partial_0 (\WPHI P\OPHI)
		& = \Big( \partial_0(\WPHI P) \Big) P\OPHI + \WPHI P \Big( \partial_0(\WPHI P)^\dagger \Big)
			= \Big( (-i \Gamma_0 \WPHI) P \OPHI + \WPHI P (-i \Gamma_0 \WPHI)^\dagger \Big)
\\
		& + \Big( (-\BNABLA \WPHI) P\OPHI + \WPHI P(-\BNABLA \WPHI)^\dagger \Big)
			+ \Big( (-i \BGAMMA \WPHI) P\OPHI + \WPHI P (-i \BGAMMA \WPHI)^\dagger \Big).
\end{aligned}
\label{E:RHOT}
\end{equation}
The first term on the right-hand side of \eqref{E:RHOT} vanishes.

Our goal is to extract differential equations for the components $j^\kappa$ of the chiral current. We multiply \eqref{E:RHOT} from the left by $\FE^\kappa = \overline{\FE_\kappa}$ and compute the scalar part. For the last two terms on the right-hand side of \eqref{E:RHOT} we have
\begin{align}
	\bigg( \FE^\kappa \Big( (\BNABLA\WPHI)P\OPHI
		+ \WPHI P(\BNABLA\WPHI)^\dagger \Big) \bigg)_0
	& = \bigg( \FE^\kappa\FE^l \Big( (\partial_l\WPHI)P\OPHI \Big)
		+ \FE^l\FE^\kappa \Big( \WPHI P(\partial_l\OPHI) \Big) \bigg)_0,
\label{E:UNNO}
\\
  \bigg( \FE^\kappa \Big( (i \BGAMMA \WPHI) P\OPHI + \WPHI P (i \BGAMMA \WPHI)^\dagger \Big) \bigg)_0
	& = i \Gamma_l \Big( (\FE^\kappa\FE^l - \FE^l\FE^\kappa) \WPHI P\OPHI \Big)_0;
\label{E:SECTERM}
\end{align}
recall \eqref{E:SWITCH}. For $\kappa, \nu = 0 \ldots 3$ we have the identity (see \eqref{E:LIFT})
\begin{equation}
	\Big( \FE^\kappa \partial_\nu (\WPHI P\OPHI) \Big)_0
	= \partial_\nu \Big( \FE^\kappa (\WPHI P\OPHI) \Big)_0
	= \partial_\nu j^\kappa.
\label{E:ALLES}
\end{equation}

Let us consider the case $\kappa = 0$ first. Then \eqref{E:UNNO}/\eqref{E:ALLES} gives
\[
	\Big( (\BNABLA\WPHI)P\OPHI + \WPHI P(\BNABLA\WPHI)^\dagger \Big)_0
		= \bigg( \FE^l \Big( (\partial_l\WPHI)P\OPHI \Big) + \FE^l \Big( \WPHI P(\partial_l\OPHI) \Big) \bigg)_0
		= \partial_l j^l,
\]
while \eqref{E:SECTERM} vanishes. From this and \eqref{E:RHOT} we obtain $\partial_\mu j^\mu = 0$.

We restrict $\kappa$ to $k = 1\ldots 3$ and use \eqref{E:ANTICO2} and Remark~\ref{R:SFG} to rewrite \eqref{E:UNNO} as
\begin{align*}
	\bigg( \FE^k \Big( (\BNABLA\WPHI)P\OPHI
		+ \WPHI P(\BNABLA\WPHI)^\dagger \Big) \bigg)_0
	& = \delta^{kl} \, \Big( (\partial_l\WPHI)P\OPHI
		+ \WPHI P(\partial_l\OPHI) \Big)_0
\\
	& - i {\varepsilon^{kl}}_n \,
		\bigg( \FE^n \Big( (\partial_l\WPHI)P\OPHI
			- \WPHI P(\partial_l\OPHI) \Big) \bigg)_0.
\end{align*}
The first term on the right-hand side equals $\delta^{kl} (\partial_l j^0)$; see \eqref{E:ALLES}. For the second one, we use expression \eqref{E:NULL} for the energy-momentum tensor to obtain
\[
	\bigg( \FE^n \Big( (\partial_l\WPHI)P\OPHI
		- \WPHI P(\partial_l\OPHI) \Big) \bigg)_0
	= 2i ({T_l}^n - \Gamma_l j^n).
\]
Using \eqref{E:ANTICO2} once more, we derive from \eqref{E:SECTERM} that
\[
    \bigg( \FE^k \Big( (i \BGAMMA \WPHI) P\OPHI + \WPHI P (i \BGAMMA \WPHI)^\dagger \Big) \bigg)_0
		= i \Gamma_l \Big( (\FE^k \FE^l - \FE^l \FE^k) \WPHI P \OPHI \Big)_0
		= 2 {\varepsilon^{kl}}_n \Gamma_l j^n.
\]
Using \eqref{E:RHOT}/\eqref{E:ALLES} and combining terms, we obtain for $k = 1\ldots 3$ that
\[
	\partial_0 j^k
		= -\delta^{kl} \, \partial_l j^0
		- 2 {\varepsilon^{kl}}_n ({T_l}^n - \Gamma_l j^n)
		- 2 {\varepsilon^{kl}}_n \Gamma_l j^n.
\]
Cancelling and rearranging terms, we obtain equation \eqref{E:CURRENT}.
\end{proof}

\begin{lemma}
\label{L:EXPR}
Let $\phi$, $\FJ$, and $T$ be as in Lemma~\ref{L:MOME}. Define
\begin{equation}
	\text{$\RHO \coloneq j_0$, $v_\nu \coloneq j_\nu/j_0$, and $p_\nu \coloneq {T_\nu}^0$}
	\quad\text{for $\nu=0\ldots 3$.}
\label{E:NOTAT}
\end{equation}
Then $j_\mu j^\mu = 0$ (implying that $\|\vec{v}\| = 1$) and
\begin{equation}
\tcboxmath{ 
	{T_n}^l = p_n v^l - \frac{1}{2}\RHO {\varepsilon^{lk}}_o v_k (\partial_n v^o)
	\quad\text{for $l,n = 1\ldots 3$.}
} 
\label{E:RESULT}
\end{equation}
\end{lemma}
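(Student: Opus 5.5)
The plan is to pass from $\phi$ to the single column spinor $\psi \coloneq \WPHI P$, which is the object satisfying \eqref{E:SIMPL}, and to express everything through the rank-one Hermitean matrix $\psi\psi^\dagger$. Since $\widehat{\phi}^\dagger = \OPHI$ and $P^\dagger = P = P^2$, we have $\psi\psi^\dagger = \WPHI P\OPHI$. By Lemma~\ref{L:CUU}, the coefficients of this matrix are exactly the $j_\mu$, i.e.\ $\psi\psi^\dagger = j_\mu\FE^\mu$. The matrix has rank at most one, so its determinant vanishes. By \eqref{E:DETT} this gives $j_\mu j^\mu = j_0^2-\|\vec{j}\|^2 = 0$. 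Dividing by $\RHO^2 = j_0^2$ (assumed nonzero) yields $\|\vec v\|=1$, which settles the first claim. We may therefore write $\psi\psi^\dagger = \RHO(1+ v_k\FE^k) \eqcolon \RHO\,\Pi$. Here $\Pi$ is twice a rank-one orthogonal projector, since $\Pi^2 = 2\Pi$ by $\|\vec v\|=1$ and \eqref{E:ANTICO2}.

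Next, by \eqref{E:NULL}, ${T_n}^l = -\frac{i}{2}\big(\FE^l B_n\big)_0 + \Gamma_n j^l$ with the anti-Hermitean matrix $B_n \coloneq (\partial_n\psi)\psi^\dagger - \psi(\partial_n\psi)^\dagger$. I will decompose $B_n$ using the rank-one structure. First, $\psi^\dagger\psi = \TRACE(\psi\psi^\dagger) = 2\RHO$. Second, for a rank-one $\psi\psi^\dagger$ one has the identity
\[
	B_n = \frac{(\psi^\dagger\partial_n\psi - (\partial_n\psi)^\dagger\psi)}{2\RHO}\,\psi\psi^\dagger
		+ \frac{1}{2\RHO}\big[\partial_n(\psi\psi^\dagger),\,\psi\psi^\dagger\big].
\]
I would verify this by writing $\partial_n\psi = a\psi + w$ with $w\perp\psi$ and comparing both sides. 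The identity says that $B_n$ consists of a scalar, purely imaginary multiple $i\alpha_n$ of $\psi\psi^\dagger$ plus a commutator term. Taking $(\FE^l\,\cdot\,)_0$ of the first part gives $i\alpha_n j^l$. For the commutator, $[\partial_n(\RHO\Pi),\RHO\Pi] = \RHO^2[\partial_n v_o\FE^o, v_k\FE^k]$, because the scalar parts and the $\partial_n\RHO$ term commute away. By \eqref{E:ANTICO2} this equals $-2i\RHO^2\varepsilon$-contracted terms of the form $v_k\partial_n v_o\,\FE^\cdot$. Applying $(\FE^l\,\cdot\,)_0$ and dividing by $2\RHO$ then produces a contribution of the form $\RHO\,{\varepsilon^{lk}}_o v_k\partial_n v^o$, up to the sign and index-placement conventions of Remark~\ref{R:SFG}.

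Combining the two parts, ${T_n}^\mu = (\tfrac{1}{2}\alpha_n + \Gamma_n) j^\mu + (\text{commutator term})^\mu$. The commutator term has no $\FE^0$ component, because it is a commutator of vectors. Setting $\mu = 0$ therefore identifies $p_n = {T_n}^0 = (\tfrac12\alpha_n+\Gamma_n)\RHO$. Substituting back and using $j^l = \RHO v^l$ gives ${T_n}^l = p_n v^l - \tfrac12\RHO{\varepsilon^{lk}}_o v_k\partial_n v^o$, which is \eqref{E:RESULT}.

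I expect the main obstacle to be bookkeeping rather than ideas. Two points need care: getting the rank-one decomposition of $B_n$ exactly right, and tracking the signs from raised versus lowered indices. The relevant conventions are \eqref{E:DUALB} and \eqref{E:STRZ}, the relation $c_{klm} = -i\varepsilon_{klm}$, and the $v_\nu$ versus $v^\nu$ distinction. Only with these handled correctly does the factor come out as $-\tfrac12$ with the stated index pattern. I would first check the whole formula on the constant-direction case $\partial_n v = 0$, where it must reduce to ${T_n}^l = p_n v^l$. I would then check the sign of the spin term on one explicit example with $\vec v$ rotating in a coordinate plane.
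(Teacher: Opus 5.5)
Your proposal is correct, and it reaches \eqref{E:RESULT} by a different route from the paper's.

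The paper never splits $\partial_n\psi$. Instead it inserts the expansion $\WPHI P\OPHI = (\WPHI P\OPHI)_\mu\FE^\mu$ between $P\OPHI$ and $\FE^\lambda(\partial_\nu\WPHI)P$, and symmetrically between $P(\partial_\nu\OPHI)\FE^\lambda$ and $\WPHI P$. Because $P\CL{3}P$ is one-dimensional and hence commutative, \eqref{E:RESOLVE} lets it cancel the $\mu=0$ term, giving linear relations $j_0\big(P\OPHI\FE^\lambda(\partial_\nu\WPHI)P\big) = -j_k\big(P\OPHI\FE^k\FE^\lambda(\partial_\nu\WPHI)P\big)$ and their mirror images. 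Subtracting and taking scalar parts, with $j_\mu j^\mu=0$, gives $j_0{T_\nu}^0 + j_k{T_\nu}^k = 0$ for $\lambda=0$. For $\lambda=l$, the $\varepsilon$-part of \eqref{E:ANTICO2} turns the symmetric combination into $\partial_n j^o$ and produces the spin term. Lightlikeness comes from $\FJ\overline{\FJ}=0$ via $\OP P=0$.

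Your identity $B_n = i\alpha_n\psi\psi^\dagger + \frac{1}{2\RHO}[\partial_n(\psi\psi^\dagger),\psi\psi^\dagger]$ does hold, as one checks by writing $\partial_n\psi = a\psi+w$ with $\psi^\dagger w=0$. It encodes the same rank-one fact at the level of $2\times 2$ matrices. What it buys is transparency: ${T_n}^\mu$ is visibly $(p_n/\RHO)j^\mu$ plus a traceless piece depending only on $\vec v$, and $v_k{T_n}^k=-p_n$ follows for free. The paper's version stays inside the Clifford calculus, and its left/right multiplication trick is reused, after differentiation, in the proof of the quantization condition.

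Two small corrections.

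\emph{Index slip.} Lemma~\ref{L:CUU} gives $(\WPHI P\OPHI)_\mu = j^\mu$ with an upper index. Hence $\psi\psi^\dagger = j^0 + j^k\FE^k = \overline{\FJ}$, not $j_\mu\FE^\mu = \FJ$. Your later use of $(\FE^l\psi\psi^\dagger)_0 = j^l$ is the correct statement. The slip is harmless, because both the determinant and the commutator are even in $\vec v$.

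\emph{The open sign.} The sign you left undetermined comes out as claimed. The commutator contributes $-\frac12\RHO\,\varepsilon_{okl}v^k(\partial_n v^o)$ to ${T_n}^l$, with $\varepsilon_{okl}$ as in \eqref{E:LC}. This equals $-\frac12\RHO{\varepsilon^{lk}}_o v_k(\partial_n v^o)$, since ${\varepsilon^{lk}}_o=\varepsilon_{lko}$ and $v_k=-v^k$.
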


\begin{proof}
From definition of the chiral current in Lemma~\ref{L:MOME}, we obtain
\[
	\FJ\overline{\FJ}
		= \phi\OP\phi^\dagger \, \overline{\phi\OP\phi^\dagger}
		= \phi\OP (\phi^\dagger\widehat{\phi}) P\OPHI
		= \phi\OP (\OPHI\phi)^\dagger P\OPHI
		= (\OPHI\phi)^\dagger \, \phi\OP P\OPHI
		= 0
\]
because $\OPHI\phi$ is always a scalar and $\OP P = 0$. It follows that $\FJ$ is lightlike, thus $j_\mu j^\mu = 0$. Writing $\FJ = \RHO(1 + \BV)$ with $\BV = v^k \FE_k$, we obtain the claim $\|\vec{v}\| = 1$ ($v_k v^k = -1$).

We multiply the expansion $\FP = \FP_\mu \FE^\mu$ with $\FP = \WPHI P\OPHI$ by $P\OPHI$ from the left and by $\FE^\lambda (\partial_\nu\WPHI)P$ from the right (recall that $P^2 = P$) to obtain
\begin{equation}
	(P\OPHI\WPHI P) \Big( P\OPHI\FE^\lambda (\partial_\nu\WPHI)P \Big)
		= (\WPHI P\OPHI)_\mu \Big( P\OPHI\FE^\mu\FE^\lambda(\partial_\nu\WPHI)P \Big).
\label{E:OPP}
\end{equation}
Note that because of \eqref{E:EXPA2} and since $P^2=P$, terms on the left-hand side of \eqref{E:OPP} are products of elements in the one-dimensional double-sided ideal discussed at the end of Section~\ref{SS:LSI}, specifically of elements of the form $\alpha P$, with $\alpha \in \C$ some number. For such elements, multiplication is commutative and we can change the order of factors. By Lemma~\ref{L:CUU}, the coefficients of $\FJ$ can be computed as
\[
	j^\mu
		= (\phi\OP\phi^\dagger)^\mu
		= (\WPHI P\OPHI)_\mu
		= \Big( P \OPHI\FE^\mu\WPHI P \Big)_0
	\quad\text{for $\mu=0\ldots 3$.}
\]
Combining this with \eqref{E:EXPA2}, we conclude that (notice the factor $2$)
\begin{equation}
	P\OPHI\WPHI P
		= P (P\OPHI\WPHI P) P
		= 2 (P\OPHI\WPHI P)_0 P
		= 2 (\WPHI P\OPHI)_0 P
		= 2 j^0 P
\label{E:RESOLVE}
\end{equation}
because $P^2 = P$. The term with $\mu=0$ on the right-hand side of \eqref{E:OPP} is therefore the same as the one on the left and can be canceled once. We arrive at
\begin{equation}
	j_0 \Big( P\OPHI\FE^\lambda (\partial_\nu\WPHI)P \Big)
	= -j_k \Big( P\OPHI\FE^k\FE^\lambda (\partial_\nu\WPHI)P \Big),
\label{E:OP}
\end{equation}
where we have used that $j_k = -j^k$ for $k=1\ldots 3$ and $j_0 = j^0$; see \eqref{E:DUALB}.

Similarly, we multiply $\FP = \FP_\mu \FE^\mu$ with $\FP = \WPHI P\OPHI$ by $P(\partial_\nu\OPHI)\FE^\lambda$ and $\WPHI P$ from the left and right, respectively. After simplification, we obtain the equality
\begin{equation}
	j_0 \Big( P(\partial_\nu\OPHI)\FE^\lambda\WPHI P \Big)
	= -j_k \Big( P(\partial_\nu\OPHI)\FE^\lambda\FE^k\WPHI P \Big).
\label{E:OP2}
\end{equation}
Subtracting \eqref{E:OP2} from \eqref{E:OP} and computing the scalar parts, we find
\[
	j_0 \bigg( \FE^\lambda \Big( (\partial_\nu\WPHI)P\OPHI - \WPHI P(\partial_\nu\OPHI) \Big) \bigg)_0
	= -j_k \bigg(
		\FE^k\FE^\lambda \Big( (\partial_\nu\WPHI)P\OPHI \Big)
		- \FE^\lambda\FE^k \Big( \WPHI P(\partial_\nu\OPHI) \Big) \bigg)_0.
\]
For $\lambda = 0$ we can use \eqref{E:NULL} to rewrite this identity in the form
\[
	j_0 \, 2i ({T_\nu}^0 - \Gamma_\nu j^0) + j_k \, 2i ({T_\nu}^k - \Gamma_\nu j^k) = 0,
\]
which we rewrite as
\[
	j_0 {T_\nu}^0 + j_k {T_\nu}^k = \Gamma_\nu ( j_0 j^0 + j_k j^k) = 0
\]
because $j_\mu j^\mu = 0$. We conclude that $-p_n = v_k {T_n}^k$ for $n = 0\ldots 3$.

Restricting $\lambda, \nu$ to $l,n = 1\ldots 3$, we use \eqref{E:ANTICO2} to obtain
\begin{align*}
	& j_0 \bigg( \FE^l \Big( (\partial_n \WPHI)P\OPHI - \WPHI P(\partial_n \OPHI) \Big) \bigg)_0
\\
	& \quad = -j_k
		\delta^{kl} \Big( (\partial_n \WPHI)P\OPHI - \WPHI P(\partial_n \OPHI) \Big)_0
		+ i{\varepsilon^{kl}}_o j_k \bigg( \FE^o \Big( (\partial_n \WPHI)P\OPHI
			+ \WPHI P(\partial_n \OPHI) \Big) \bigg)_0.
\end{align*}
Since $-j_k \delta^{kl} = j^l$ this can be rewritten using \eqref{E:NULL} as
\[
	j_0 \, 2i ({T_n}^l - \Gamma_n j^l)
		= j^l \, 2i ({T_n}^0 - \Gamma_n j^0)
		+ i{\varepsilon^{kl}}_o j_k (\partial_n j^o).
\]
Here terms with $\Gamma_n$ cancel because $j_0 = j^0$.
With $j^o = \RHO v^o$  etc.\ we can simplify
\[
	{\varepsilon^{kl}}_o v_k (\partial_n j^o)
		= (\partial_n \RHO) {\varepsilon^{kl}}_o v_k v^o
			+ \RHO {\varepsilon^{kl}}_o v_k (\partial_n v^o)
		= -\RHO {\varepsilon^{lk}}_o v_k (\partial_n v^o)
\]
because of antisymmetry of the Levi-Civita tensor. Then \eqref{E:RESULT} follows.
\end{proof}

We can now state our hydrodynamics formulation of the Dirac equation.

\begin{theorem}
Suppose $\phi$ is a solution of the Dirac equation \eqref{E:DIRACEQ}. Let $(\RHO_\cdot,\BV_\cdot,\BP_\cdot)$ for $\cdot = R,L$ be defined as in Lemma~\ref{L:EXPR} using $P$ and $\OP$, respectively. Then
\begin{equation}
\tcboxmath{ 
\begin{aligned}
	\vphantom{\frac{1}{2}}
	\partial_0 \RHO + \nabla\cdot(\RHO\BV)
	&= 0
\\
	\partial_0 (\RHO\BV)
		+ \nabla \cdot (\RHO\BV\otimes\BV)
	& = \big( \BV\times(\BV\times\nabla\RHO) \big) + 2 \Big( \BP + \RHO(\nabla \times \BV) \Big)  \times \BV
\\
	\partial_0 p_n + \nabla \cdot (p_n \BV)
	& = \frac{1}{2} \nabla \cdot \big( \RHO\BV \times (\partial_n\BV) \big)
		\pm \RHO G_{n\kappa} v^\kappa
	\quad\text{for $n = 1\ldots 3$}
\end{aligned}
} 
\label{E:HYD}
\end{equation}
with sign $+$ for the case $(\RHO,\BV,\BP) = (\RHO_R,\BV_R,\BP_R)$ and sign $-$ otherwise.
\end{theorem}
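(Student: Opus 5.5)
The plan is to rewrite the component identities of Lemmas~\ref{L:MOME}, \ref{L:CURR} and \ref{L:EXPR} in three-vector notation. I treat one chirality first, the one for which those lemmas were stated: $\FJ=\phi\OP\phi^\dagger$, and $T$ built from $\WPHI P$, which satisfies \eqref{E:SIMPL}. The other chirality I obtain by symmetry. Replacing $P$ by $\OP$ replaces \eqref{E:SIMPL} by the same equation with $\Gamma\mapsto-\Gamma$, because $\FE_3\OP=-\OP$. Every step of the proofs of Lemmas~\ref{L:MOME}--\ref{L:EXPR} then goes through verbatim with $\Gamma$ replaced by $-\Gamma$. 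Hence $G\mapsto -G$, and this is the source of the sign $\pm$ in the last line of \eqref{E:HYD}. Throughout I set $\RHO=j^0=j_0$, $j^k=\RHO v^k$, and $v^0=1$, and I use $\|\vec v\|=1$ from Lemma~\ref{L:EXPR}.

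\emph{Continuity equation.} This is immediate from $\partial_\mu j^\mu=0$ in Lemma~\ref{L:CURR}, written as $\partial_0\RHO+\partial_k(\RHO v^k)=0$.

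\emph{Momentum equation.} I insert \eqref{E:RESULT} into \eqref{E:CURRENT}:
\[
	\partial_0(\RHO v^k)+\partial_k\RHO
	+2{\varepsilon^{kl}}_n p_l v^n
	-\RHO\,{\varepsilon^{kl}}_n{\varepsilon^{nm}}_o v_m\,\partial_l v^o=0 .
\]
Then I translate to cross products, keeping careful track of the index conventions: raising or lowering a spatial index costs a sign, and $v_k=-v^k$. The term ${\varepsilon^{kl}}_n p_l v^n$ becomes a component of $\BP\times\BV$, up to a sign fixed by the conventions. For the double-epsilon term I use the contraction $\varepsilon\varepsilon=\delta\delta-\delta\delta$, which produces $v^m\partial_k v^m$ and $v^l\partial_l v^k$. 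The first of these vanishes because $\vec v\cdot\partial\vec v=\frac12\partial\|\vec v\|^2=0$. The same identity gives $(\nabla\times\BV)\times\BV=(\BV\cdot\nabla)\BV$, which lets me trade $\RHO(\BV\cdot\nabla)\BV$ for the curl term. On the other side I expand
\[
	\nabla\cdot(\RHO\BV\otimes\BV)=\BV(\BV\cdot\nabla\RHO)+\RHO(\BV\cdot\nabla)\BV+\RHO\BV(\nabla\cdot\BV).
\]
The last term is absorbed using the continuity equation once $\partial_0(\RHO\BV)=\RHO\partial_0\BV+\BV\partial_0\RHO$ is split. Finally, $\BV(\BV\cdot\nabla\RHO)-\nabla\RHO=\BV\times(\BV\times\nabla\RHO)$ because $\|\vec v\|=1$. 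Collecting terms should give the second line of \eqref{E:HYD}, with the factor $2$ in front of both $\BP\times\BV$ and $\RHO(\nabla\times\BV)\times\BV$.

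\emph{Energy--momentum equation.} From Lemma~\ref{L:MOME} with $\nu=n$ I get
\[
	\partial_0 {T_n}^0+\partial_l{T_n}^l=G_{\mu n}j^\mu=-\RHO\,G_{n\kappa}v^\kappa ,
\]
using $p_n={T_n}^0$ and $j^\kappa=\RHO v^\kappa$. Substituting \eqref{E:RESULT} for ${T_n}^l$ gives
\[
	\partial_l{T_n}^l=\nabla\cdot(p_n\BV)-\tfrac12\,\partial_l\big(\RHO\,{\varepsilon^{lk}}_o v_k\,\partial_n v^o\big),
\]
and the second term is $-\frac12\nabla\cdot(\RHO\BV\times\partial_n\BV)$ up to the convention sign. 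Moving it to the right-hand side yields the third line, with a definite sign in front of $\RHO G_{n\kappa}v^\kappa$ for this chirality and the opposite sign for the other by the $\Gamma\mapsto-\Gamma$ symmetry. Which chirality carries which sign then has to be matched against the paper's labelling, since Lemma~\ref{L:MOME} mixes $P$ (in $T$) with $\OP$ (in $\FJ$).

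I expect the main obstacle to be the sign bookkeeping in the momentum equation. Converting ${\varepsilon^{kl}}_n$ with mixed index positions and $v_k=-v^k$ into standard cross products, and then verifying that the double contraction produces exactly $2\RHO(\nabla\times\BV)\times\BV$ and not a multiple of $(\BV\cdot\nabla)\BV$ with the wrong sign, must be done carefully. I would first check it on the plane-wave example, where $\BV$ is constant, and on a single component before committing to the general computation.
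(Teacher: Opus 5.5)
Your overall plan is the paper's: continuity from $\partial_\mu j^\mu=0$, insert \eqref{E:RESULT} into \eqref{E:CURRENT} for the momentum equation, combine Lemmas~\ref{L:MOME} and~\ref{L:EXPR} for the third line, and obtain the other chirality from $\Gamma\mapsto-\Gamma$.

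The momentum step, however, is wrong as you describe it, and your repair does not work. In $\RHO\,{\varepsilon^{kl}}_n{\varepsilon^{nm}}_o v_m\,\partial_l v^o$ the free index $k$ and the derivative index $l$ sit in the \emph{same} $\varepsilon$. The identity $\varepsilon\varepsilon=\delta\delta-\delta\delta$ always pairs an index of one factor with an index of the other, so a term $\partial_k$ acting on $\BV$ (your $v^m\partial_k v^m$) cannot arise. With ${\varepsilon^{kl}}_n=\varepsilon_{kln}$, ${\varepsilon^{nm}}_o=\varepsilon_{nmo}$ and $v_m=-v^m$ one gets, as in the paper,
\[
{\varepsilon^{kl}}_n{\varepsilon^{nm}}_o v_m\,\partial_l v^o=(\BV\cdot\nabla)v^k-v^k(\nabla\cdot\BV).
\]
The term $v^k(\nabla\cdot\BV)$ does not vanish. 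It is exactly what cancels the $\RHO\BV(\nabla\cdot\BV)$ in your expansion of $\nabla\cdot(\RHO\BV\otimes\BV)$. The paper packages this as $-\RHO(\BV\cdot\nabla)v^k+\RHO v^k(\nabla\cdot\BV)=\nabla\cdot(\RHO v^k\BV)-2\RHO(\BV\cdot\nabla)v^k-v^k(\BV\cdot\nabla)\RHO$.

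The continuity equation cannot do this job. $\partial_0(\RHO\BV)$ enters \eqref{E:CURRENT} and the target in the same form, so any substitution $\partial_0\RHO=-\nabla\cdot(\RHO\BV)$ on one side must be undone on the other. Executed as written, your computation would leave a spurious $\RHO(\nabla\cdot\BV)\BV$. Your plane-wave test cannot catch this: there $\RHO$ and $\BV$ are constant, so every derivative term vanishes.

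Two smaller points.

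First, Lemma~\ref{L:MOME} does not mix chiralities. By \eqref{E:IDD}, $\widehat{\OP}=P$, so $\WPHI P=\widehat{\phi\OP}$. Hence both $T$ and $\FJ$ are built from $\phi\OP$, which is the case the paper calls right.

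Second, in the third line do not leave the signs to ``conventions''. With ${\varepsilon^{lk}}_o=\varepsilon_{lko}$ and $v_k=-v^k$, \eqref{E:RESULT} reads ${T_n}^l=p_nv^l+\frac12\RHO(\BV\times\partial_n\BV)^l$, and $G_{\mu n}j^\mu=-\RHO G_{n\kappa}v^\kappa$. A literal substitution therefore gives the right-hand side of the boxed third line with the opposite overall sign. The paper's one-line ``combine the lemmas'' does not address this, so you must reconcile it explicitly rather than assume it comes out right.
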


\begin{proof}
The continuity equation in \eqref{E:HYD} follows from Lemma~\ref{L:CURR}. Then we use Lemma~\ref{L:EXPR} to replace ${T_l}^n$ on the left-hand side of \eqref{E:CURRENT}. We write
\[
	2 {\varepsilon^{kl}}_n {T_l}^n
		= 2 {\varepsilon^{kl}}_n p_l v^n
		- \RHO {\varepsilon^{kl}}_n {\varepsilon^{nm}}_o v_m (\partial_l v^o).
\]
Recalling Remark~\ref{R:SFG}, we observe that
\[
	{\varepsilon^{kl}}_n p_l v^n
		= -\varepsilon_{kln} p^l v^n
		= -(\BP\times\BV)^k.
\]
Indeed notice that by standard convention $(a \times b)^i = \varepsilon_{ijk} a^j b^k$ for vectors $a, b \in \R^3$.
Similarly, using the properties of the Levi-Civita symbol, we obtain
\begin{align*}
	{\varepsilon^{kl}}_n {\varepsilon^{nm}}_o v_m (\partial_l v^o)
		& = \sum_{l,m,o} \varepsilon_{kln} \varepsilon^{mon} v^m (\partial_l v^o)
\\
		& = -\sum_{l,m,o} \big( \delta_k^m \delta_l^o
			- \delta_k^o \delta_l^m \big) v^m (\partial_l v^o)
		= (\BV\cdot\nabla) v^k - v^k (\nabla\cdot\BV).
\end{align*}
Note that according to Remark~\ref{R:SFG}, we have $\varepsilon^{ijk} = -\varepsilon_{ijk}$ (lowering three indices) whereas in text books there is usually no minus sign. This amounts for the additional minus after the second equality. We obtain the identity
\[
	2 {\varepsilon^{kl}}_n {T_l}^n
		= -2 (\BP\times\BV)^k
			- \RHO \big( (\BV\cdot\nabla) v^k - v^k (\nabla\cdot\BV) \big).
\]
Since $a \times (b \times c) = b(a \cdot c) - c(a\cdot b)$ for $a, b, c \in \R^3$, it follows that
\begin{gather*}
	\big( \BV \times (\nabla\times \BV) \big)^k
		= \BV\cdot (\partial_k\BV) - (\BV\cdot \nabla) v^k
		= -(\BV\cdot \nabla) v^k,
\\
 	\big( \BV \times (\BV\times \nabla\RHO) \big)^k
		= v^k \big((\BV\cdot\nabla)\RHO \big) - (\partial_k\RHO) (\BV\cdot\BV)
		= v^k \big((\BV\cdot\nabla)\RHO \big) - \partial_k\RHO.
\end{gather*}
for $k=1\ldots 3$. Here we have used that $\BV\cdot\BV = 1$ and $\BV\cdot\partial_k\BV = 0$. We also need
\[
	-\RHO \big( (\BV\cdot\nabla) v^k \big) + \RHO v^k (\nabla\cdot\BV)
		= \nabla\cdot(\RHO v^k \BV)
			-2\RHO \big( (\BV\cdot\nabla) v^k \big)
			-v^k \big( (\BV\cdot\nabla)\RHO \big).
\]
Combining all terms, we find (with $j_0 = \RHO$) that
\begin{align*}
	& \partial_k j^0 + 2 {\varepsilon^{kl}}_n {T_l}^n
\\
	&\qquad
		= -2(\BP\times\BV)^k
			+ \nabla\cdot(\RHO v^k\BV)
			- 2\RHO\big( (\nabla\times\BV)\times\BV \big)^k
			- \big( \BV\times(\BV\times\nabla\RHO) \big)^k.
\end{align*}
This gives the second equation in \eqref{E:HYD}. For the third one, we combine Lemmas~\ref{L:MOME} and \ref{L:EXPR}. This proves the result for the case of the right spinor.

For the left spinor, we simply notice that substituting $\OP$ for $P$ results in a change of sign in the zeroth order term of the Dirac equation \eqref{E:DIRACEQ} because $\FE^3 \OP = -\OP$. The only place where this term appears in the hydrodynamic system is on the right-hand side of the conservation law for the energy-momentum tensor in Lemma~\ref{L:MOME}. Consequently, we obtain a sign change in the third equation of \eqref{E:HYD}.
\end{proof}

\begin{remark}
Consider the right spinor case. Since $\RHO = j^0 = (\phi\OP\phi^\dagger)_0$ the spinor $\phi P$ vanishes if and only if $\RHO$ is zero. The energy-momentum \eqref{E:TETRODE} is thus absolutely continuous with respect to $\RHO$ so there exists a vector field $\BU$ such that $\BP = \RHO \BU$. Note that the convective derivatives in \eqref{E:HYD} all involve the same transport velocity $\BV$. Using the product rule, we can therefore rewrite \eqref{E:HYD} in the form
\begin{equation}
\begin{aligned}
	\vphantom{\frac{1}{2}}
	\dot{\RHO} &= -\RHO (\nabla\cdot\BV)
\\
	\dot{\BV}
		& = \BV\times\big( \BV\times\nabla\log(\RHO) \big)
			+ 2 \big( \BU + \nabla\times\BV  \big) \times \BV
\\
	\dot{u}_n
		& = \frac{1}{2\RHO} \nabla \cdot \big( \RHO\BV \times (\partial_n\BV) \big)
			\pm G_{n\kappa} v^\kappa
	\quad\text{for $n=1\ldots 3$}
\end{aligned}
\label{E:FLOWL}
\end{equation}
where the dot stands for the directional derivative $\partial_0 + \BV\cdot\nabla$ in the direction of the flowlines, which are the solutions of the ordinary differential equation $\dot{\gamma} = \BV(\cdot,\gamma)$. These are different from the flowlines determined by the Dirac current $J = NV$, which are the solutions of $\dot{\gamma} = V(\cdot, \gamma)$; see \eqref{E:PILOT}. Indeed the picture suggested by \eqref{E:FLOWL} is the following: For any solution $\phi$ of \eqref{E:DIRACEQ}, the Dirac current defines a congruence of flowlines. These flowlines guide the motion of the left and right spinors in the sense that their respective flowlines wind around the flowlines of the Dirac current. Because of this, we call the Dirac current the pilot wave, using De Broglie's terminology. The left and right spinor flowlines move with light speed as $\|\BV\| = 1$, whereas the velocity of the Dirac current is subluminal.

This is the picture discussed by Hestenes \cites{Hestenes2020a,Hestenes2020b} who offers such a helical motion as a possible explanation for Schrödinger's Zitterbewegung. The evolution \eqref{E:FLOWL} is also reminiscent of the classical model of the Dirac electron proposed by Barut \& Zanghi \cite{BarutZanghi1984}; see in particular equation (2) there. Their model also considers separately the velocity $\BV$ of the particle whose time derivative (along flowlines) is influenced by the momentum $\BP$. The time derivative of the momentum is given by the Lorentz force. Since the Barut-Zanghi model describes only a single particle, there is no interaction between left/right spinors and the Dirac current.

Notice that if the momentum $\BP$ in \eqref{E:HYD} were collinear with the velocity $\BV$ as in classical mechanics, then there would be no force from $\BP$ acting on $\BV$ because the cross product of two collinear vectors vanish. The same remark applies to the velocity $V$ of the Dirac current. The right-hand side of the third equation in \eqref{E:HYD} thus captures the quantum effects, in a similar way as the quantum potential \eqref{E:BOHM} captures the quantum effects in the hydrodynamics model \eqref{E:JJH} of the Schrödinger equation. In \eqref{E:HYD} the particle mass $m$ only enters as a constant determining the strength of the coupling of the left/right currents to the Dirac current.

\end{remark}


\subsection{Quantization condition}

The hydrodynamical system \eqref{E:HYD} could admit solutions that are not derived from the nonlinear Dirac equation. Dirac solutions must have additional properties; they must satisfy a so-called quantization condition, which amounts to the curl of the momentum field $\BU = \BP/\RHO$ being predetermined in terms of derivatives of the velocity $\BV$. In this case, the number of unknowns of \eqref{E:HYD} reduces to two sets of four real-valued fields each for the left/right chiral parts: the density $\RHO$, the three components of the velocity $\BV$, and the longitudinal component $\BP\cdot\BV$ of the momentum $\BP = \RHO \BU$. Recall that $\|\BV\| = 1$, i.e., characteristics move with the speed of light (like Weyl fermions). This is consistent with the nonlinear Dirac equation because spinors have four complex components.

\begin{proposition}
Let $v^l, p_n \eqcolon \RHO u_n$ be defined as in Lemma~\ref{L:EXPR}. Then
\begin{equation}
	\varepsilon^{imn} \bigg( \partial_m (u_n - \Gamma_n)
		+ \frac{1}{4} \BV \cdot \Big( (\partial_m\BV) \times (\partial_n \BV) \Big) \bigg) = 0
\quad\text{for $i=1\ldots 3$.}
\label{E:QUCO}
\end{equation}
\end{proposition}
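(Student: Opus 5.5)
The plan is to reduce everything to a single two-component spinor and recognize \eqref{E:QUCO} as the classical identity relating the curl of the Berry connection of a normalized spinor to the solid-angle form of its Bloch vector (the Mermin--Ho relation). Fix the right spinor case and set $\psi \coloneq \WPHI P$. In the matrix representation of Section~\ref{S:ME} this is a matrix whose right column vanishes, so it is a $\C^2$-valued field $\psi = (\psi_1,\psi_2)^T$. By Lemma~\ref{L:CUU} and \eqref{E:RESOLVE}, $\RHO = j^0$ is proportional to $|\psi|^2 = \psi^\ast\psi$, and the $j^k$ are proportional to $\psi^\ast \sigma_k \psi$, up to the sign conventions of \eqref{E:DUALB}. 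Hence the unit vector $\vec v$ of Lemma~\ref{L:EXPR} is, up to a global sign, the Bloch vector $\vec n \coloneq \chi^\ast\vec\sigma\chi$ of the normalized spinor $\chi \coloneq \psi/|\psi|$. Fixing this sign carefully is the first step. I would check it on $\psi = (1,0)^T$, where $\phi\OP\phi^\dagger$ has to be compared with $\WPHI P \OPHI$ through \eqref{E:DEFMAT}.

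Second, I would compute $u_n - \Gamma_n$ from \eqref{E:NULL} with $\mu = 0$. There ${T_n}^0 - \Gamma_n j^0 = -\tfrac{i}{2}\big((\partial_n\WPHI)P\OPHI - \WPHI P(\partial_n\OPHI)\big)_0$. Using $\OPHI = \widehat{\phi}^{\dagger}$-type relations (namely $P\OPHI = (\WPHI P)^\dagger$, which follows from $\widehat{\FP} = \overline{\FP}^\dagger$) together with $(\cdot)_0 = \tfrac12\TRACE$, this becomes
\[
	p_n - \Gamma_n\RHO = c\,\IM\big(\psi^\ast\partial_n\psi\big)
\]
for an explicit real constant $c$, which I expect to be $\pm 1/2$. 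Dividing by $\RHO \propto |\psi|^2$ and writing $\psi = |\psi|\chi$ gives
\[
	u_n - \Gamma_n = c'\,\IM(\chi^\ast\partial_n\chi)\eqcolon c'\,a_n .
\]
Here the contribution of $\partial_n|\psi|$ drops out because $\chi^\ast\partial_n\chi$ is purely imaginary.

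Third, I would show $\varepsilon^{imn}\partial_m a_n = \pm\tfrac14\varepsilon^{imn}\,\vec n\cdot(\partial_m\vec n\times\partial_n\vec n)$. Locally, away from the nodal set $\{\RHO = 0\}$ where $\vec v$ is undefined anyway, I would write
\[
	\chi = e^{i\alpha}\big(\cos\tfrac{\theta}{2},\, e^{i\varphi}\sin\tfrac{\theta}{2}\big)^T,
\]
so that $\vec n = (\sin\theta\cos\varphi,\sin\theta\sin\varphi,\cos\theta)$. Then $a = d\alpha + \sin^2\tfrac{\theta}{2}\,d\varphi$, and $da = \tfrac12\sin\theta\,d\theta\wedge d\varphi$, which is exactly $\tfrac12$ times the pullback of the area form $\vec n\cdot(d\vec n\times d\vec n)/2$. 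The gradient $d\alpha$ is curl-free, and the coordinate singularities at $\theta=0,\pi$ are handled by the other chart, $\alpha\mapsto\alpha+\varphi$, which changes $a$ only by an exact form. Collecting constants should produce the factor $\tfrac14$.

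The left spinor case is identical with $\OP$ in place of $P$. I expect the only genuine obstacle to be bookkeeping of signs and factors of $2$: the relation between $v^k$ and $\vec n$ (raised versus lowered indices and the $\FE^k = -\FE_k$ convention), the factor $2$ in \eqref{E:RESOLVE}, and the orientation of $\varepsilon^{imn}$ under Remark~\ref{R:SFG}. These must conspire so that the sum appears with a plus sign, as in \eqref{E:QUCO}. I would pin them down by testing the final identity on an explicit spinor with $\theta,\varphi$ depending linearly on $x^1,x^2$.
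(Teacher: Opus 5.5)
Your argument is correct, and it takes a genuinely different route from the paper.

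\textbf{The paper's route.} It stays inside the Clifford formalism. It differentiates the expansion $\WPHI P\OPHI=(\WPHI P\OPHI)_\kappa\FE^\kappa$ and multiplies it by $P\OPHI$ and $\FE^\lambda(\partial_\nu\WPHI)P$ (and the mirrored factors). It commutes the resulting elements of the one-dimensional space $P\CL{3}P\cong\C$, and antisymmetrizes so that the second-derivative terms $S^l_{mn}$ cancel against $\varepsilon^{imn}$. It then identifies all terms with $j$ and $T$ through \eqref{E:NULL}. Only at the end does it contract with $v_l$ and insert \eqref{E:RESULT} for the spatial components ${T_n}^l$.

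\textbf{Your route.} You identify $\CL{3}P$ with $\C^2$ and recognize \eqref{E:QUCO} as the Mermin--Ho relation for the Berry connection. This needs only the $\mu=0$ component of \eqref{E:NULL} and bypasses \eqref{E:RESULT} entirely. Your constants do close:
\begin{itemize}
\item With $\psi=\WPHI P$ one has $\WPHI P\OPHI=\psi\psi^\dagger=j^0-j^k\FE_k$. Hence $\RHO=\frac12|\psi|^2$ and $j^k=-\frac12\psi^\dagger\FE_k\psi$, so $\BV=-\vec n$. Your test on $\psi=(1,0)^T$ confirms this sign.
\item $p_n-\Gamma_n\RHO=\frac12\IM(\psi^\dagger\partial_n\psi)$, so $c=\frac12$ and $u_n-\Gamma_n=\IM(\chi^\dagger\partial_n\chi)$ exactly.
\item Mermin--Ho gives $\varepsilon^{imn}\partial_m a_n=\frac14\varepsilon^{imn}\,\vec n\cdot(\partial_m\vec n\times\partial_n\vec n)$ with the standard cross product. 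Since the triple product is cubic, $\BV=-\vec n$ turns this into exactly \eqref{E:QUCO}, with the plus sign.
\end{itemize}

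\textbf{One small repair.} The gauge change $\alpha\mapsto\alpha+\varphi$ removes the Dirac string of $a$. It does not remove the degeneracy of the coordinates $(\theta,\varphi)$ at the poles. Instead, note that a constant $U\in\mathrm{SU}(2)$ leaves $a$ unchanged and rotates $\vec n$ rigidly, preserving the triple product. So near any given point you may assume $\vec n$ stays away from the poles, where your chart computation is valid. Alternatively, use continuity, together with the observation that both sides vanish on open sets where $\vec n$ is constant.

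\textbf{What each route buys.} Your route is much shorter, and it explains the name ``quantization condition'': the vorticity of $u-\Gamma$ equals, up to sign, the solid-angle density of $\BV$. It also makes visible that the identity is purely kinematic. It holds for every smooth $\phi$ on $\{\RHO>0\}$ without using \eqref{E:DIRACEQ}; this is also true of the paper's proof, but hidden there. The paper's route avoids any choice of phase or coordinates on the sphere. It is phrased entirely in the fields $j$ and $T$ that feed the hydrodynamic system.
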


\begin{proof} We proceed in four steps.
\medskip

\textbf{Step~1.} Differentiating the expansion $\FP = \FP_\kappa \FE^\kappa$ with $\FP = \WPHI P \OPHI$, we get
\begin{equation}
	(\partial_\mu \WPHI) P \OPHI + \WPHI P (\partial_\mu \OPHI)
		= \partial_\mu(\WPHI P \OPHI)_\kappa \FE^\kappa
		= \partial_\mu(\WPHI P \OPHI)_0
		+ \partial_\mu(\WPHI P \OPHI)_k \FE^k.
\label{E:CURL2}
\end{equation}
We multiply \eqref{E:CURL2} from the left by $P \OPHI$, by $\FE^\lambda (\partial_\nu \WPHI) P$ from the right. Then
\begin{equation}
\begin{aligned}
	& \Big( P\OPHI (\partial_\mu\WPHI) P \Big) P\OPHI \FE^\lambda (\partial_\nu \WPHI) P
		+ \Big( P\OPHI \WPHI P \Big) P (\partial_\mu\OPHI) \FE^\lambda (\partial_\nu \WPHI) P
\\
	& \qquad
		= \Big( \partial_\mu (\WPHI P\OPHI)_0 \Big) P\OPHI \FE^\lambda (\partial_\nu\WPHI) P
			+ \Big( \partial_\mu (\WPHI P\OPHI)_k \Big) P\OPHI \FE^k\FE^\lambda (\partial_\nu\WPHI) P.
\end{aligned}
\label{E:USCH}
\end{equation}
We have used that $P^2=P$. Note that the parentheses on the right-hand side are scalar, hence commute with $P\OPHI$. Differentiating \eqref{E:RESOLVE}, we obtain the identity
\[
 	\Big( \partial_\mu (\WPHI P \OPHI)_0 \Big) P\OPHI \FE^\lambda (\partial_\nu\WPHI) P
	 = \frac{1}{2} \Big( P \OPHI (\partial_\mu\WPHI) P + P (\partial_\mu\OPHI) \WPHI P \Big)
		P\OPHI \FE^\lambda (\partial_\nu\WPHI) P,
\]
which we can absorb into the first term on the left-hand side of \eqref{E:USCH}. Then
\begin{equation}
\begin{aligned}
	& \frac{1}{2} \Big( P\OPHI (\partial_\mu\WPHI)P - P(\partial_\mu \OPHI) \WPHI P \Big)
		P\OPHI \FE^\lambda (\partial_\nu \WPHI) P
\\
	& \qquad
			+ \Big( P\OPHI \WPHI P \Big) P (\partial_\mu\OPHI) \FE^\lambda (\partial_\nu \WPHI) P
		= \Big( \partial_\mu (\WPHI P\OPHI)_k \Big) P\OPHI \FE^k\FE^\lambda (\partial_\nu\WPHI) P.
\end{aligned}
\label{E:IDONE}
\end{equation}
Multiplying \eqref{E:CURL2} by $\WPHI P$ from the right, by $P (\partial_\nu \OPHI) \FE^\lambda$ from the left, we get
\begin{equation}
\begin{aligned}
	& P (\partial_\nu\OPHI) \FE^\lambda (\partial_\mu\WPHI) P \Big( P\OPHI \WPHI P \Big)
		+ P (\partial_\nu\OPHI) \FE^\lambda \WPHI P \Big( P(\partial_\mu\OPHI) \WPHI P \Big)
\\
	& \qquad
		= \Big( \partial_\mu (\WPHI P\OPHI)_0 \Big) P (\partial_\nu\OPHI) \FE^\lambda \WPHI P
			+ \Big( \partial_\mu (\WPHI P\OPHI)_k \Big) P (\partial_\nu\OPHI) \FE^\lambda \FE^k \WPHI P.
\end{aligned}
\label{E:LLO}
\end{equation}
Arguing as above and reordering, we can rewrite \eqref{E:LLO} in the form
\begin{equation}
\begin{aligned}
	& -\frac{1}{2} \Big( P\OPHI (\partial_\mu\WPHI)P - P(\partial_\mu \OPHI) \WPHI P \Big)
		P(\partial_\nu\OPHI) \FE^\lambda \WPHI P
\\
	& \qquad
			+ \Big( P\OPHI \WPHI P \Big) P (\partial_\nu\OPHI) \FE^\lambda (\partial_\mu \WPHI) P
		= \Big( \partial_\mu (\WPHI P\OPHI)_k \Big) P(\partial_\nu\OPHI) \FE^\lambda\FE^k \WPHI P.
\end{aligned}
\label{E:IDTWO}
\end{equation}
Taking the difference of equations \eqref{E:IDONE} and \eqref{E:IDTWO}, we obtain
\begin{equation}
\begin{aligned}
	& \frac{1}{2} \Big( P\OPHI (\partial_\mu\WPHI)P - P(\partial_\mu \OPHI) \WPHI P \Big)
		\, \partial_\nu (P\OPHI \FE^\lambda \WPHI P)
\\
	& \quad
	+ P\OPHI \WPHI P \Big( P (\partial_\mu\OPHI) \FE^\lambda (\partial_\nu \WPHI) P
		- P (\partial_\nu\OPHI) \FE^\lambda (\partial_\mu \WPHI) P \Big)
\\
	& \quad\qquad
	= \Big( \partial_\mu (\WPHI P\OPHI)_k \Big) \Big( P\OPHI \FE^k\FE^\lambda (\partial_\nu\WPHI) P
		- P(\partial_\nu\OPHI) \FE^\lambda\FE^k \WPHI P \Big).
\end{aligned}
\label{E:ALMOST}
\end{equation}

\medskip

\textbf{Step~2.} For the second term on the left-hand side of \eqref{E:ALMOST}, we have that
\begin{align*}
	& P (\partial_\mu\OPHI) \FE^\lambda (\partial_\nu \WPHI) P
		- P (\partial_\nu\OPHI) \FE^\lambda (\partial_\mu \WPHI) P
\\
	& \qquad
		= \partial_\mu \Big( P \OPHI \FE^\lambda (\partial_\nu \WPHI) P
	    	- P (\partial_\nu\OPHI) \FE^\lambda \WPHI P \Big)
		- \Big( P \OPHI \FE^\lambda (\partial^2_{\mu\nu} \WPHI) P
			- P (\partial^2_{\mu\nu}\OPHI) \FE^\lambda \WPHI P \Big).
\end{align*}
The second term on the right-hand side, which we will denote by $S^\lambda_{\mu\nu}$ in the following, is symmetric in $\mu,\nu$ because mixed second derivatives commute.

We restrict $\lambda$ to indices $l = 1 \ldots 3$ and use \eqref{E:ANTICO2} to obtain
\begin{align*}
	& P\OPHI \FE^k \FE^l (\partial_\nu\WPHI) P
		- P(\partial_\nu\OPHI) \FE^l \FE^k \WPHI P
\\
	& \qquad
	= \delta^{kl} \Big( P\OPHI (\partial_\nu\WPHI) P - P(\partial_\nu\OPHI) \WPHI P \Big)
		- i{\varepsilon^{kl}}_o \, \partial_\nu (P\OPHI \FE^o \WPHI P).
\end{align*}
Collecting terms, we arrive at the identity
\[
\begin{aligned}
	& \frac{1}{2} \Big( P\OPHI (\partial_\mu\WPHI)P - P(\partial_\mu \OPHI) \WPHI P \Big)
		\, \partial_\nu (P\OPHI \FE^l \WPHI P)
\\
	& \quad
	+ P\OPHI \WPHI P \bigg( \partial_\mu \Big( P \OPHI \FE^l (\partial_\nu \WPHI) P
	    	- P (\partial_\nu\OPHI) \FE^l \WPHI P \Big)
		- S^l_{\mu\nu} \bigg)
\\
	& \quad\qquad
	= \Big( \partial_\mu (\WPHI P\OPHI)_k \Big)
		\bigg( \delta^{kl} \Big( P\OPHI (\partial_\nu\WPHI) P - P(\partial_\nu\OPHI) \WPHI P \Big)
			- i{\varepsilon^{kl}}_o \, \partial_\nu (P\OPHI \FE^o \WPHI P) \bigg).
\end{aligned}
\]

\medskip

\textbf{Step~3.} We identify terms with hydrodynamic quantities. First, we have
\[
	P\OPHI \FE^\lambda \WPHI P
		= 2 (P\OPHI \FE^\lambda \WPHI P)_0 P
		= 2 (\WPHI P \OPHI)_\lambda P
		= 2 (\phi \overline{P} \phi^\dagger)^\lambda P
		= 2 j^\lambda P
\]
for $\lambda = 0\ldots 3$, using $P^2=P$, \eqref{E:EXPA2}, and \eqref{E:LIFT}. Second, we observe that
\begin{align*}
	& P\OPHI \FE^\lambda (\partial_\nu\WPHI) P - P (\partial_\nu\OPHI) \FE^\lambda \WPHI P
		 = 2 \Big( P\OPHI \FE^\lambda (\partial_\nu\WPHI) P - P (\partial_\nu\OPHI) \FE^\lambda \WPHI P \Big)_0 P
\\
	& \qquad
		= 2 \bigg( \FE^\lambda \Big( (\partial_\nu\WPHI) P \OPHI - \WPHI P (\partial_\nu\OPHI) \Big) \bigg)_0 P
		= 4i ({T_\nu}^\lambda -\Gamma_\nu j^\lambda) P
\end{align*}
for $\nu,\lambda = 0\ldots 3$; see again \eqref{E:EXPA2}, \eqref{E:ROTAT}, and \eqref{E:NULL}. It follows that
\begin{equation}
\begin{aligned}
	& \frac{1}{2} \, 4i ({T_\mu}^0 -\Gamma_\mu j^0) \, \partial_\nu (2j^l)
		+ 2 j^0 \, \Big( 4i \, \partial_\mu({T_\nu}^l-\Gamma_\nu j^l) - S^l_{\mu\nu} \Big)
\\
	& \qquad
		= \sum_k \partial_\mu j^k \bigg( \delta^{kl} \, 4i ({T_\nu}^0-\Gamma_\nu j^0)
			- i{{\varepsilon}^{kl}}_o \, \partial_\nu (2j^o) \bigg),
\end{aligned}
\label{E:READY}
\end{equation}
after cancelling the factor $P$. Simplifying and rearranging terms, we arrive at
\[
\begin{aligned}
	& \Big( {T_\mu}^0 (\partial_\nu j^l) + 2 j^0 \partial_\mu {T_\nu}^l \Big)
		- \Big( \Gamma_\mu j^0 (\partial_\nu j^l) + 2 j^0 \partial_\mu(\Gamma_\nu j^l) \Big)
		- \frac{1}{2i} j^0 S^l_{\mu\nu}
\\
	& \qquad
		= \sum_k \partial_\mu j^k \bigg( \delta^{kl} ({T_\nu}^0-\Gamma_\nu j^0)
			-\frac{1}{2} {{\varepsilon}^{kl}}_o (\partial_\nu j^o) \bigg).
\end{aligned}
\]

\medskip

\textbf{Step~4.} In the following, we will use the notation introduced in \eqref{E:NOTAT}.

We now restrict $\mu,\nu$ to $m,n = 1\ldots 3$, multiply \eqref{E:READY} by $\varepsilon^{imn} v_l$, and sum over $m,n,l = 1\ldots 3$. Then $S_{mn}$ drops out, by symmetry. We further observe that
\[
	\varepsilon^{imn} \Big( \partial_m j^l ({T_n}^0-\Gamma_n j^0) \Big)
		= -\varepsilon^{imn} ({T_m}^0 -\Gamma_m j^0) \partial_n j^l,
\]
which we move to the left-hand side of \eqref{E:READY}. We have that
\[
	v_l (\partial_n j^l) = (v_l v^l) \partial_n \RHO + \RHO v_l (\partial_n v^l)
		= -\partial_n \RHO
\]
because $v^l = -v_l$ and $v_l v^l = -1$, thus $v_l(\partial_nv^l) = 0$. Similarly, we find
\begin{equation}
	v_l \Big( \Gamma_m (\partial_n j^l) + \partial_m (\Gamma_n j^l) \Big)
		= -\Big( \Gamma_m (\partial_n\RHO) + \Gamma_n (\partial_m\RHO) \Big)
			-\RHO (\partial_m \Gamma_n).
\label{E:MORE}
\end{equation}
Since the first term on the right-hand side of \eqref{E:MORE} is symmetric in $m,n$, it drops out when multiplied against $\varepsilon^{imn}$ and summed over $m,n$. We have
\begin{equation}
\begin{aligned}
	& v_l \Big( {T_m}^0 (\partial_n j^l) + \RHO (\partial_m {T_n}^l) \Big)
\\
	& \qquad
		= v_l \Bigg( p_m (\partial_n j^l) + \RHO \partial_m \bigg( p_n v^l
			- \frac{1}{2} \RHO {\varepsilon^{lk}}_o v_k (\partial_n v^o) \bigg) \Bigg)
\\
	& \qquad
		= -\RHO \Big( u_m (\partial_n\RHO) + u_n (\partial_m\RHO) \Big)
			- \RHO^2 (\partial_m u_n)
		- \frac{1}{2} \RHO^2 {\varepsilon^{lk}}_o v_l (\partial_m v_k) (\partial_n v^o),
\end{aligned}
\label{E:LOLI}
\end{equation}
writing $p_m = \RHO u_m$ and using Lemma~\ref{L:EXPR}. Here we have used that
\[
	v_l \partial_m \Big( \RHO {\varepsilon^{lk}}_o v_k (\partial_n v^o) \Big)
		= \RHO {\varepsilon^{lk}}_o v_l (\partial_m v_k) (\partial_n v^o)
\]
because ${\varepsilon^{kl}}_o v_k v_l = 0$, by symmetry. Again the first term on the right-hand side of \eqref{E:LOLI} drops out after multiplication by $\varepsilon^{imn}$ and summation in $m,n$. Finally
\begin{align*}
	\sum_k {\varepsilon^{kl}}_o v_l (\partial_m j^k) (\partial_n j^o)
		= \sum_k \RHO^2 {\varepsilon^{kl}}_o v_l (\partial_m v^k) (\partial_n v^o)
\end{align*}
because $\sum_k {\varepsilon^{kl}}_o v_l v^k = 0$ etc., by symmetry and \eqref{E:LC}. It follows that
\begin{align*}
	& 2 \RHO^2 \varepsilon^{imn} \bigg( \partial_m (-u_n + \Gamma_n)
		- \frac{1}{2} {\varepsilon^{lk}}_o v_l (\partial_m v_k) (\partial_n v^o) \bigg)
\\
	& \qquad
		= -\frac{1}{2} \RHO^2 \varepsilon^{imn} \sum_k {\varepsilon^{kl}}_o v_l (\partial_m v^k) (\partial_n v^o).
\end{align*}
Using Remark~\ref{R:SFG}, we observe that
\begin{gather*}
	{\varepsilon^{lk}}_o v_l (\partial_m v_k)
		= \varepsilon_{lko} v^l (\partial_m v^k),
\\
	-\sum_k {\varepsilon^{kl}}_o v_l (\partial_m v^k)
		= \varepsilon_{klo} v^l (\partial_m v^k)
		= -\varepsilon_{lko} v^l (\partial_m v^k).
\end{gather*}
Cancelling and rearranging terms, we then obtain \eqref{E:QUCO}.
\end{proof}


\printbibliography

@article{BarutZanghi1984,
  author  = {Barut, A. O. and Zangh\`{i}, N.},
  title   = {Classical model of the {D}irac electron},
  journal = {Phys. Rev. Lett.},
  volume  = {52},
  year    = {1984},
  pages   = {2009--2012},
  doi     = {10.1103/PhysRevLett.52.2009}
}

@incollection{Baylis1996,
  author    = {Baylis, W. E.},
  title     = {The paravector model of spacetime},
  booktitle = {Clifford (geometric) algebras ({B}anff, {AB}, 1995)},
  pages     = {237--252},
  publisher = {Birkh\"{a}user, Boston, MA},
  year      = {1996}
}

@article{BialynickiBirula1995,
  author  = {Bia\l ynicki-Birula, I.},
  title   = {Hydrodynamic form of the {W}eyl equation},
  journal = {Acta Phys. Polon. B},
  volume  = {26},
  year    = {1995},
  pages   = {1201--1208}
}

@book{CazenaveHarauxMartel1998,
  author    = {Cazenave, T. and Haraux, A. and Martel, Y.},
  title     = {An Introduction to Semilinear Evolution Equations},
  publisher = {Oxford University Press},
  year      = {1998},
  doi       = {10.1093/oso/9780198502777.001.0001}
}

@book{DaviauBertrandSocrounGirardot2022,
  author    = {Daviau, C. and Bertrand, J. and Socroun, T. and Girardot, D.},
  title     = {Developing the Theory of Everything},
  year      = {2022},
  publisher = {Fondation Louis de Broglie},
  isbn      = {978-2-910458005},
  url       = {https://www.researchgate.net/publication/362092441_Developing_the_Theory_of_Everything}
}

@article{Hestenes1973,
  author  = {Hestenes, D.},
  title   = {Local observables in the Dirac theory},
  journal = {J. Math. Phys.},
  volume  = {14},
  year    = {1973},
  pages   = {893--905}
}

@article{Hestenes2020a,
  author  = {Hestenes, D.},
  title   = {Quantum Mechanics of the electron particle-clock},
  journal = {arXiv preprint arXiv:1910.10478},
  year    = {2020}
}

@article{Hestenes2020b,
  author  = {Hestenes, D.},
  title   = {Zitterbewegung structure in electrons and photons},
  journal = {arXiv preprint arXiv:1910.11085},
  year    = {2020}
}

@article{Ivanenko1938,
  author  = {Ivanenko, D.},
  title   = {Notes to the theory of interaction via particles},
  journal = {Zh. Eksp. Teor. Fiz},
  volume  = {8},
  year    = {1938},
  pages   = {260--266}
}

@article{Madelung1926,
  author  = {Madelung, E.},
  title   = {Eine anschauliche Deutung der Gleichung von Schrödinger},
  journal = {Naturwissenschaften},
  volume  = {14},
  year    = {1926},
  pages   = {1004},
  doi     = {10.1007/BF01504657}
}

@article{Madelung1927,
  author  = {Madelung, E.},
  title   = {Quantentheorie in hydrodynamischer Form},
  journal = {Z. Phys.},
  volume  = {40},
  year    = {1927},
  pages   = {322--326},
  doi     = {10.1007/BF01400372}
}

@article{Soler1970,
  title   = {Classical, Stable, Nonlinear Spinor Field with Positive Rest Energy},
  author  = {Soler, M.},
  journal = {Phys. Rev. D},
  volume  = {1},
  pages   = {2766--2769},
  year    = {1970},
  doi     = {10.1103/PhysRevD.1.2766}
}

@article{Takabayasi1957,
  title   = {Relativistic Hydrodynamics of the Dirac Matter. Part I. General Theory},
  author  = {Takabayasi, T.},
  journal = {Progress of Theoretical Physics Supplement},
  year    = {1957},
  volume  = {4},
  pages   = {1-80}
}

@book{Tretter2008,
  author    = {Tretter, C.},
  title     = {Spectral Theory of Block Operator Matrices and Applications},
  publisher = {World Scientific},
  year      = {2008},
  doi       = {10.1142/p493}
}


\end{document}